\numberwithin{equation}{section}
\renewcommand{\@todonotes@drawMarginNoteWithLine}{%
\begin{tikzpicture}[remember picture, overlay, baseline=-0.75ex]%
    \node [coordinate] (inText) {};%
\end{tikzpicture}%
\marginnote[{%
    \@todonotes@drawMarginNote%
    \@todonotes@drawLineToLeftMargin%
}]{%
    \@todonotes@drawMarginNote%
    \@todonotes@drawLineToRightMargin%
}%
}
\patchcmd{\@settitle}{\uppercasenonmath\@title}{\LARGE\scshape}{}{}
\patchcmd{\@setauthors}{\MakeUppercase}{\scshape}{}{}
\patchcmd{\@setauthors}{\footnotesize}{\normalsize}{}{}
\definecolor{darkblue}{rgb}{0.0, 0.2, 0.6}
\definecolor{brickred}{rgb}{0.8, 0.25, 0.33}
\definecolor{forestgreen}{rgb}{0.13, 0.55, 0.13}
\declaretheoremstyle[spaceabove=8pt,
					 spacebelow=8pt]{mythmstyle}
\declaretheorem[name=Theorem,
				numberwithin=section,
               ]{theorem}
\declaretheorem[name=Proposition,
				sibling=theorem,
               ]{proposition}
\declaretheorem[name=Assumption,
				sibling=theorem
               ]{assumption}
\declaretheorem[name=Remark,
				style=remark,
                qed=$\triangle$,
                sibling=theorem]{remark}
\declaretheorem[name=Example,
				style=remark,
                qed=$\triangle$,
                sibling=theorem]{example}
\title[Micro-macro acceleration for linear slow-fast SDEs]{Study of micro-macro acceleration schemes for linear slow-fast stochastic differential equations with additive noise$^*$\footnote{$^*$This is a pre-print of an article published in BIT Numerical Mathematics. The final authenticated version is available online at: https://doi.org/10.1007/s10543-020-00804-5}}
\subjclass[2010]{Primary, 65C30, 60H35, 65L20; Secondary, 94A17, 62E17}
\keywords{micro-macro simulations, entropy optimisation, stiff stochastic differential equations, Kullback-Leibler divergence, stability}
\author[K.~Debrabant]{Kristian Debrabant}
\address[K.~Debrabant]{Department of Mathematics and Computer Science, University of Southern Denmark, Campusvej 55, 5230 Odense M, Denmark}
\email{debrabant@imada.sdu.dk}
\author[G.~Samaey]{Giovanni Samaey}
\address[G.~Samaey]{KU Leuven, Department of Computer Science, NUMA Section, 3001 Heverlee, Belgium}
\email{giovanni.samaey@kuleuven.be}
\author[P.~Zieli\'{n}ski]{Przemys{\l}aw Zieli\'{n}ski}
\address[P.~Zieli\'{n}ski]{KU Leuven, Department of Computer Science, NUMA Section, 3001 Heverlee, Belgium}
\curraddr{Ecole F\'{e}d\'{e}rale Polytechnique de Lausanne, CH-1015 Lausanne, Switzerland}
\email{przemyslaw.zielinski@epfl.ch}
\date{\today}
\newcommand{\prob}{\mathscr{P}} %
\newcommand{\contb}{\mathscr{C}_{\mathrm{b}}}
\newcommand{\leb}[1]{\mathscr{L}^#1}
\newcommand{\bor}{\mathfrak{B}} %
\newcommand{\Law}[1]{\mathrm{Law}(#1)} %
\newcommand{\nd}{\mathcal{N}} %
\newcommand{\diag}{\mathrm{diag}} %
\newcommand{\kld}{\mathcal{D}} %
\newcommand{\gen}{\mathcal{L}} %
\newcommand{\match}{\mathcal{M}} %
\newcommand{\res}{\mathcal{R}} %
\newcommand{\prop}{\mathcal{S}} %
\newcommand{\ext}{\mathcal{E}} %
\newcommand{\proj}{\Pi} %
\newcommand{\dsk}{\mathbb{D}}
\begin{document}

\begin{abstract}
Computational multi-scale methods capitalize on a large time-scale separation to efficiently simulate slow dynamics over long time intervals. For stochastic systems, one often aims at resolving the statistics of the slowest dynamics. This paper looks at the efficiency of a micro-macro acceleration method that couples short bursts of stochastic path simulation with extrapolation of spatial averages forward in time. To have explicit derivations, we elicit an amenable linear test equation containing multiple time scales. We make derivations and perform numerical experiments in the Gaussian setting, where only the evolution of mean and variance matters. The analysis shows that, for this test model, the stability threshold on the extrapolation step is largely independent of the time-scale separation. In consequence, the micro-macro acceleration method increases the admissible time steps far beyond those for which a direct time discretization becomes unstable.
\end{abstract}

\maketitle

\section{Introduction}\label{se:intro}

Models with multiple time scales abound in a large variety of domains: complex fluids, materials research, life sciences and bio-mechanics, to name a few~\cite{BriLel2009,Keunings2004,PraSitKre2008,DeiWanMacCri2011,FavKraPiv2016}.
At the same time, the design and analysis of efficient numerical methods for multi-scale stochastic differential equations (SDEs) remain challenging. While explicit schemes require excruciatingly small time steps, implicit schemes -- though successful for stiff ordinary differential equations (ODEs)~-- may yield an incorrect invariant distribution in the stochastic case~\cite{LiAbdE2008}. Thus, dedicated computational multiscale methods for SDEs are required. An extensive body of work already exists, see \cite{KevSam2009,GivKupStu2004,AbdEEngEij2012} and references therein.

In~\cite{DebSamZie2017}, we introduced a micro-macro acceleration method for the simulation of SDEs with a separation between the (fast) time scale of individual trajectories and the (slow) time scale of the macroscopic function of interest. The method couples the microscopic model, of which we have full knowledge, to a macroscopic level, described by a finite set of macroscopic state variables -- averages over the microscopic distribution. To bypass the prohibitive cost of the direct Monte Carlo simulation, the method alternates between short bursts of microscopic path simulation and extrapolation of macroscopic states forward in time.
One time step $\De t$ of the micro-macro acceleration method consists of: (i) microscopic \emph{simulation} of the full stochastic system with a~small batch $K$ of micro time steps $\de t$; (ii) \emph{restriction} of the simulated paths to $L$ macroscopic states at each of the $K+1$ time-points; (iii) forward in time \emph{extrapolation} of the macroscopic state over $\De t - K\de t$, based on the estimation of the macroscopic time derivative from the states obtained in (ii); and (iv) \emph{matching} of the last microscopic state from stage (i) with the extrapolated macroscopic state.
The matching is an inference procedure that renders a minimal perturbation of a prior microscopic state (available just before the extrapolation) consistent with the extrapolated macroscopic state.

In this manuscript, we study \emph{asymptotic numerical stability} of the micro-macro acceleration method -- preservation of asymptotic qualitative behaviour of equilibria under time discretisation\footnote{Not to be confused with \emph{numerical stability} (necessary for convergence) that measures the robustness of a numerical scheme with respect to perturbations, such as round-of errors, over finite time horizon as step-size tends to zero.} -- in terms of the extrapolation time step and the time-scale separation present in the system.
Linear stability analysis constitutes a necessary part of analysis for any numerical scheme.
Indeed, to be of computational interest, the micro-macro acceleration method should satisfy two basic properties: (i) it should \emph{converge} to the full microscopic dynamics in an appropriate limit; and (ii) it should be \emph{more efficient} than a full microscopic simulation.

In~\cite{DebSamZie2017,LelSamZie2019}, item (i) -- the convergence --  was rigorously analyzed in a general setting for non-linear SDEs. The results demonstrate that the micro-macro acceleration method recovers the exact evolution of the SDE in the limit when the number $L$ of macroscopic state variables tends to infinity and both the microscopic $\de t$ and macroscopic $\De t$ time steps tend to zero. The quality of approximation during convergence was measured by the weak error between the exact and numerical solutions or in $L^2$ norm between the corresponding laws. In~\cite{DebSamZie2017}, the rate of convergence in terms of $L$, $\de t$ and $\De t$ was obtained under generic assumptions on the matching, whereas~\cite{LelSamZie2019}  gives the proof of the convergence without a rate for matching based on minimization of Kullback-Leibler divergence.
The main results of this work tackle item (ii) for a simple linear test case and demonstrate that the micro-macro acceleration method increases the admissible time step for stiff systems well beyond step sizes for which a direct time discretization becomes unstable. In the models we analyse, the stiffness results from the presence of the fast and slow microscopic variables -- evolving on vastly different time scales quantified by the eigenvalues of the associated linear operator. At the macroscopic level, we only extrapolate averages of the slow variables. We show that in this case, the extrapolation time step is only limited by the ``slow'' eigenvalues of the system.
Therefore, the extrapolation time step is not artificially constrained by the ``fast'' eigenvalues, and can be determined purely based on accuracy considerations. (We defer a precise definition of ``slow'' and ``fast'' eigenvalues to Section~\ref{se:block_diag}.)

Before proceeding to the technical content of this manuscript, we briefly review the complications that arise when analyzing linear stability in the stochastic setting.

\textbf{Linear stability of numerical methods.}
In deterministic numerical analysis, the study of linear stability of time-discretization methods focuses on the simple scalar linear test equation $\dot{x}=\ka x$~\cite{Iserles2008}.
The results can be transferred to systems of linear ODEs by decoupling the system into a set of scalar (complex) test equations using spectral factorization. One then considers the same linear scalar equation, in which $\ka$ runs over all eigenvalues of the linear system.
To generalize the analysis to the asymptotic stability of nonlinear systems of ODEs, one linearizes around an equilibrium solution, leading to a~vector linear equation.
 The validity of this last step relies on the Hartman-Grobman or first approximation theorems, which bring the linear stability results back to the nonlinear problem~\cite{Iserles2008,Teschl2012}.

Obtaining similar stability results for SDEs is highly non-trivial for at least three reasons.
First, the transfer of linear stability results of numerical methods to the nonlinear setting remains unjustified in the case of stochastic systems~\cite{BucRieKlo2011}. The link between linear and nonlinear SDEs is intricate and not yet understood in full generality. Some interesting results on this point, based on normal form transforms, have been reported in~\cite{Roberts2008a,Roberts2014}. We do not address this issue here and instead start from a linear test equation from the outset. Specifically, we study linear vector stochastic differential equations in $\R^d$ with additive noise
\begin{equation}\label{eq:linSDE}
\der{X}_t = AX_t\der{t} + \sqrt{B}\der{W}_t,
\end{equation}
in which $W_t$ is a standard $d$-dimensional Wiener process on $[0,\infty)$ and the constant, nonrandom matrices $A\in\mat{d}{d}$ and $B\in\mat{d}{d}$ are called the drift and diffusion matrix, respectively.

Second, the connection between the vector and scalar cases is not a~straightforward extension of the deterministic case.
 In general, this connection can only be achieved under the~assumption of simultaneous diagonalizability of the matrices appearing in~\eqref{eq:linSDE}~\cite{BucKel2010}.
  In the context of slow-fast SDEs, this assumption is too restrictive. %
   Thus, an important issue we address in this paper concerns the definition of a suitable slow-fast vector test equation that allows studying linear stability of the micro-macro acceleration method.
Finally, both the choice of test equation and stability concept vary in the SDE literature~\cite{BucRieKlo2011}.
 Concerning the linear test equation, we face the alternative of SDEs with multiplicative or additive noise, which display different qualitative behaviour but both arise from a perturbation of the same underlying ODE.
 Regarding the stability concept in the stochastic setting, we distinguish between strong stability, which usually looks at the almost sure or mean square convergence of paths of the process to a suitable stationary solution, and weak stability, which investigates the convergence (e.g., in distribution) of laws of the process to an invariant measure of the SDE.
  Until now, research mostly focused on (both linear and non-linear) SDEs with multiplicative noise and strong stability of zero solutions of the discrete and continuous systems~\cite{Higham2000a,Saito2002,DebRos2009,BucKel2010,AbdBlu2013,DiaJer2016,SzpZha2018}.
  Also, several works investigating the weak stability of such systems exist~\cite{YuaMao2004,LiuMao2015}.
   For the additive noise, the definition of the stationary solution requires the theory of random dynamical systems~\cite{CruBisJimCarOza2010,BucRieKlo2011,DiaJer2016}.

In our study, we focus on the convergence of the (time-marginal) laws of~\eqref{eq:linSDE} to its invariant distribution.
 We therefore choose the combination of additive noise and weak stability, incidentally the least represented in the literature so far, see~\cite{HerSpi1992,Saito2008}.
 We consider the perturbation by additive noise in~\eqref{eq:linSDE} since it leads to a non-degenerate Gaussian stationary distribution, as opposed to a Dirac mass in the multiplicative case.
 Moreover, the nature of the micro-macro acceleration method -- which, by the presence of forward extrapolation of averages and the matching procedure, produces only weak approximations to the underlying SDE -- motivates the choice of the weak stability concept.

\textbf{Organization of the paper.}
In Section~\ref{se:mM_accel}, we describe the micro-macro acceleration method for general SDEs and present two matching procedures based on minimization of Kullback-Leibler divergence: one with restriction to the mean only, and a second one with both mean and variance.
In Section~\ref{se:LinSDEs_Stab}, we introduce our model problems -- linear vector SDEs with time scale separation -- and give the corresponding decomposition into slow and fast variables.
Section~\ref{se:mM_accel_linSDE} merges the two previous ones by specifying the micro-macro acceleration method for the study of linear stability of slow-fast SDEs.
Finally, in Section~\ref{se:mM_stab_gauss}, we investigate the linear stability of the micro-macro acceleration method for Gaussian initial conditions under the assumption that exact distribution is propagated. For some analytical results in the non-Gaussian case we refer to~\cite{Zielinski2019}. %
Here, we also complement the theoretical findings with numerical experiments performed in the Gaussian setting, but based on approximating distributions by empirical measures, and show accordance with analytical bounds. Some numerical results for non-linear systems can be found in~\cite{Vandecasteele2019}.

\textbf{Basic notation.}
\textit{Probability.}
By $\prob^d$ we denote the collection of all probability measures on $\R^d$ that act on the $d$-dimensional Borel $\si$-algebra $\bor^d$.
In particular, $\nd_{\mu,\Si}\in\prob^d$ stands for the normal distribution with vector mean $\mu$ and covariance matrix $\Si$.
For any $P\in\prob^d$, $\Exp[P][\,\cdot\,]$ and $\Var[\!P][\,\cdot\,]$ are the mean and covariance operators of $P$, respectively.
 If $X$ is a~random vector distributed according to $P$, in short $X\sim P$, we indicate by $\Exp[][X]$ its vector mean and by $\Var[][X]$ its covariance matrix.

\textit{Matrices.}
For $M\in\mat{d}{d'}$, the space of $d\times d'$ real matrices, $\tp{M}$ is the transpose of $M$, and if $d=d'$, $\spm{M}$ stands for the spectrum and $\tr{M}$ for the trace of $M$.
In what follows, the matrix product of $M$ and $M'$ is $MM'$, and for $M,M'\in\mat{d}{d'}$, we denote by $M\sdot M'\doteq\tr{\tp{M}M'}$ their Frobenius inner product.
In particular, if we have two (column) vectors $x,y\in\R^d=\mat{d}{1}$, $x\sdot y=\tp{x}y$ equals their scalar product.
We also use the symbol $\Id$ to denote the identity matrix in $\mat{d}{d}$.

\textit{Direct sums.}
For $d=d_s+d_f$, we will consider a~decomposition of $\R^d$ into the (direct) sum $\R^d=\R^{d_s}\oplus\R^{d_f}$. Accordingly to this decomposition, for $y\in\R^{d_s}$ and $z\in\R^{d_f}$, we denote $x=y\oplus z\in\R^d$ the vector concatenation of $y$ and $z$. In the remainder of the manuscript, we always place the indices related to the decomposition of $\R^d$ in the superscripts of relevant symbols, for example $\prob^s$ indicates the set of all probability measures on $\R^{d_s}$, reserving subscripts to indicate time and/or step number.

\textbf{Code.}
The source  code, in Python 3, can be found at \cite{source_code}.

\section{Micro-macro acceleration with Kullback-Leibler divergence matching}\label{se:mM_accel}
In this section, we describe the micro-macro acceleration method, which was first introduced in~\cite{DebSamZie2017},
as it applies to the simulation of a general It\^{o} SDE
\begin{equation}\label{eq:genSDE}
\der{X}_t = a(X_t)\der{t} + b(X_t)\der{W}_t.
\end{equation}
The basic scenario under which the method could be applied reads:
we are interested in the long time simulation of some observables $\Exp[][f(X_t)]$ of~\eqref{eq:genSDE} while the time scale on which these observables evolve is much slower than the one resolving the fine dynamics of $X_t$.

\begin{example}[Slow-fast systems]\label{ex:sfSDE}
A simple illustration of a process of interest is $X_t=(Y_t,Z_t)$ that solves the equation
\begin{gather}\label{eq:sfSDE}
\begin{aligned}
\der{Y}_t &= \ \;a_{1}(Y_t ,Z_t)\,\der{t} &+\ \hphantom{\vep^{-1/2}}\der{U}_t,\\[0.2em]
\der{Z}_t &= \vep^{-1}a_{2}(Z_t)\,\der{t} &+\ \vep^{-1/2}\der{V}_t,
\end{aligned}
\end{gather}
with $\vep>0$. For small $\vep$, the \emph{fast variable} $Z_t$ requires a fine numerical procedure to resolve. However, we could be only interested in the long time values of the moments of \emph{slow variable} $Y_t$, such as $\Exp[][Y_t]$ or $\Var[][Y_t]$.
\end{example}

The method is designed to benefit from the focus on slow moments only and bridge the scale separation by ``forgetting'' the fast variables repeatedly on a considerable part of the time domain. The stability analysis we perform in this manuscript, on the linear version of~\eqref{eq:sfSDE} (see Example~\ref{ex:test_sf}), aims at showing that the stability of the micro-macro acceleration method is independent on the scale separation $\vep$. Therefore, in such cases there are no artificial stability constraints on the macro time step due to fast variables.

\begin{remark}[On theoretical vs numerical setting]
In the next section, we present the algorithm as a procedure that propagates the exact laws of random variables over a time mesh; this is the framework we analyse in this manuscript. For numerical experiments, we further discretize the probability distributions using the Monte Carlo procedure. A comprehensive discussion on the implementation issues when using empirical distributions as approximation to exact laws can be found in~\cite{DebSamZie2017}. For convenience, we also detail how the method works with path simulations in Appendix~\ref{se:MC_sim}.
\end{remark}

\subsection{Micro-macro time acceleration.}\label{se:mM_accel_alg}
Given a distribution $P^{\De t}_n\in\prob^d$ at time $t_n=n\De t$, one time step $\De t$ of the micro-macro acceleration method advances $P^{\De t}_n$ to a distribution $P^{\De t}_{n+1}$, approximating the law of solution to SDE~\eqref{eq:genSDE} at time $t_n+\De t$, in four stages.

\textit{Stage 1: Propagation of microscopic laws.}
This stage is based on an \emph{inner integrator}: a numerical scheme $\prop^{\de t}$ for equation~\eqref{eq:genSDE}, where $\de t$ is a micro time step. For a natural number $K$ such that $K\de t<\De t$, we generate $K$ probability distributions as follows
\begin{equation}%
P^{\de t}_{n,k+1} = \Law{X^{\de t}_{n,k+1}}, \quad X_{n,k+1}^{\de t} = \prop^{\de t}(a, b, X_{n,k}, \de W_{k+1}),\quad X_{n,0}^{\de t}\sim P^{\De t}_{n,0}:=P^{\De t}_n,
\end{equation}
where $\de W_{k+1} = W_{(k+1)\de t} - W_{k\de t}$ are Brownian increments.

\textit{Stage 2: Restriction to a~finite number of observables.}
For this part, we introduce a restriction operator $\res\from\dom{\res}\subseteq\prob^d\to\R^L$ that associates to a probability distribution a number $L$ of its averages. Using $\res$, we evaluate the corresponding macroscopic states for the distributions obtained in stage $1$ by setting
\begin{equation*}
\mbf{m}_{n,k} \doteq \res(P^{\de t}_{n,k}),\quad k=0,\dotsc,K.
\end{equation*}

\textit{Stage 3: Extrapolation of macroscopic states.}
We use macroscopic states $\{\mbf{m}_{n,k}\}_{k=0,\dotsc,K}$, at times $t_n+k\de t$, and an extrapolation operator $\ext^{\De t-K\de t}$ over $\De t-K\de t$ to compute the extrapolated macroscopic state $\mbf{m}_{n+1}$ at time $t_{n+1}=t_n+\De t$, i.e.
\begin{equation}\label{eq:extrap}
\mbf{m}_{n+1} = \ext^{\De t-K\de t}(\mbf{m}_{n,0},\dotsc,\mbf{m}_{n,K}).
\end{equation}

\textit{Stage 4: Matching.}
In the last stage, we infer a~new distribution on $\R^d$, compatible with the extrapolated slow moments $\mbf{m}^s_{n+1}$. For this, we use the matching operator $\match$ (see Section~\ref{se:match}) and put
\begin{equation*}
P^{\De t}_{n+1} = \match(\mbf{m}_{n+1},P^{\de t}_{n,K}).
\end{equation*}

Matching is a crucial stage of the algorithm.
In this stage, we aim to obtain, after extrapolation, a new probability distribution of the full system compatible with given macroscopic states. To address and regularize this inference problem, our strategy for matching uses a~\emph{prior} distribution $P^{\de t}_{n,K}\in\prob^d$, which comes from the last available microscopic state at Stage 1, and alters it to make it consistent with the~extrapolated macroscopic state~$\mbf{m}_{n+1}$. We give two examples in Section~\ref{se:match}.

The other components of the micro-macro acceleration method vary and are specific to a given problem one wants to address, see~\cite{DebSamZie2017} and Section~\ref{se:mM_accel_alg_linSDE}. The scheme for the inner integrator $\prop$ can be any method suitable to simulate~\eqref{eq:genSDE} with desired accuracy and within given constraints such as: the explicit Euler-Maruyama (which we use in this paper), predictor-corrector (when the range of $X$ is bounded), or higher order schemes (to achieve better convergence). The restriction operator $\res$ corresponds to our choice of macroscopic description of system~\eqref{eq:genSDE} and is most often given by averaging a vector of functions $\mbf{R}$, so that $\res(P^{\de t}_{n,k}) = \Exp[][\mbf{R}(X^{\de t}_{n,k})]$. The extrapolation operator usually employs the  difference quotients to approximate the derivative $\dot{\mbf{m}}(t)$ of the evolution of macroscopic variables $\mbf{m}(t)=\res(X_t)$ and uses this approximation to extrapolate the macroscopic state forward in time.

\begin{remark}[On the number of micro steps]
In general, there are two main reasons to consider $K>1$ in Steps~2 and~3: when we need to equilibrate the fast variables conditioned on the slow ones before performing extrapolation, or when using higher order extrapolation that requires macroscopic observables at multiple time points. Both of these reasons are related to accuracy issues, rather than stability, and are more relevant in the non-linear settings. Thus, we restrict the theoretical analysis and numerical experiments to the case of $K=1$ and linear extrapolation (see Section~\ref{se:mM_accel_alg_linSDE}), save one numerical illustration in Section~\ref{se:mM_gauss_smenvar_num}.
\end{remark}

\subsection{Matching with mean and variance via minimum Kullback-Leibler divergence}\label{se:match}
In this manuscript, we build upon \cite{DebSamZie2017,LelSamZie2019} and obtain the matched distribution from the prior $P$ by minimising the \emph{Kullback-Leibler divergence} (also called \emph{logarithmic relative entropy})
\begin{equation}\label{eq:kld}
\kld(Q\|P)\doteq\Exp[Q]\Big[\ln\!\rnder{Q}{P}\,\Big]
\end{equation}
over all probability distributions $Q$ absolutely continuous with respect to $P$ and having correct (extrapolated) macroscopic state.
The Kullback-Leibler divergence between two probability distributions is always non-negative (a~consequence of Jensen's inequality), but it is not a~metric on $\prob^d$: it lacks the symmetry property and fails to satisfy the triangle inequality.
Nevertheless, we can use $\kld$ to quantify the proximity of two distributions since it provides an upper bound on the \emph{total variation distance} $d_{\mrm{TV}}(Q,P)\doteq\sup\{|Q(U)-P(U)|:\ U\in\bor^d\}$ via Pinsker's inequality:
$d_{\mrm{TV}}(Q,P)\leq\sqrt{\kld(Q\|P)}$~\cite{Pinsker1964}.
For additional motivation for the use of the Kullback-Leibler divergence for matching see~\cite{LelSamZie2019}.

We are specifically interested in two particular inference procedures to recover the microscopic distribution: one based on the mean only, and one based on both mean and variance. For a general description of matching with arbitrary moments, we refer to~\cite{DebSamZie2017}.

\textbf{Matching with mean only.} The first procedure, preserving the mean only, reads
\begin{equation}\label{eq:match_mean}\tag{ME}
\match(\oline{\mu},P) = \argmin_{Q\in\prob^d}\kld(Q\|P),\ \text{constrained on}\ \Exp[Q][\Pi]=\oline{\mu},
\end{equation}
where $\Pi$ denotes the identity operator\footnote{We use $\Pi$ to unify the notation between this section and the subsequent one, in which we will project onto a lower-dimensional subspace. Here, one can think of $\Pi$ as the projection from $\R^d$ onto $\R^d$.} in $\R^d$. The density of the matched distribution $Q^{\oline{\mu}}=\match(\oline{\mu},P)$ with respect to the prior $P$ is~\cite{DebSamZie2017}
\begin{equation}\label{eq:dist_mean}
\rnder{Q^{\oline{\mu}}}{P}(x)
= \exp\!\big(\oline{\la}\sdot x - A(\oline{\la},P)\big),
\end{equation}
where the log-partition function $A(\la, P) =
\ln\Exp[P]\big[\exp(\la\sdot\Pi)\big]$ gives the normalizing constant in~\eqref{eq:dist_mean} and $\oline{\la}\in \R^d$ is a~vector of Lagrange multipliers that satisfies
\begin{equation}\label{eq:lagr_mean}
\grad[\la]A(\oline{\la},P) = \oline{\mu}.
\end{equation}
Note that the log-partition function $A(\la,P)$ for matching with mean only coincides with the value of the \emph{cumulant generating function} (CGF) of $P$ at $\la$, see~\cite[A.1]{JorLab2012}.
Let us point out that $A(\la,P)$ is finite only when $\la$ belongs to the domain of the CGF of $P$, and we can look for solutions to~\eqref{eq:lagr_mean} only in this domain. We do not consider, in its full generality, the question whether Lagrange multipliers exist for any given prior $P$ and mean $\mu$. In Section~\ref{se:mM_stab_gauss}, we work within a Gaussian framework, where we can solve~\eqref{eq:lagr_mean} explicitly.

\textbf{Matching with mean and variance.} The second procedure employs additionally the full covariance matrix and it reads
\begin{equation}\label{eq:match_meanvar}\tag{MEV}
\match(\oline{\mu},\oline{\Si},P) = \argmin_{Q\in\prob^d}\kld(Q\|P),\ \text{constrained on}\ \left\{\begin{array}{l}
\Exp[Q][\Pi]=\oline{\mu}\\
\Var[Q][\Pi] = \oline{\Si}.
\end{array}\right.
\end{equation}
We introduce the vectors of Lagrange multipliers $\oline{\la}\in\R^d$ and $\oline{\La}\in\R^{d\times d}$, and write the Radon-Nikodym derivative of the matched distribution $Q^{\oline{\mu},\oline{\Si}} = \match(\oline{\mu},\oline{\Si},P)$ as~\cite{DebSamZie2017}
\begin{equation*}
\rnder{Q^{\oline{\mu},\oline{\Si}}}{P}(x) = \exp\!\big(\oline{\la}\sdot x + \oline{\La}\sdot(x-\Exp[P][\Pi])^2 - A(\oline{\la},\oline{\La},P)\big),
\end{equation*}
where
$v^2\doteq v\tp{v}$ for any $v\in\R^d$. In this case, the log-partition function reads $A(\la,\La,P) = \ln\Exp[P]\big[\exp\big(\la\sdot\Pi + \La\sdot(\Pi-\Exp[P][\Pi])^2\big)\big]$ and the Lagrange multipliers $(\oline{\la},\oline{\La})$ satisfy
\begin{equation}\label{eq:lagr_meanvar}
\left\{\!\begin{array}{l}
\grad[\la]A(\oline{\la},\oline{\La},P) = \oline{\mu}\\
\grad[\La]A(\oline{\la},\oline{\La},P) = \oline{\Si}.
\end{array}\right.
\end{equation}
Note that, due to symmetry in $\oline{\Si}$, the systems for the constraints and for the Lagrange multipliers are overdetermined. In fact, to obtain the Lagrange multipliers, we only need to solve $d(d+3)/2$ non-linear equations. Similarly as for the matching with mean only, there arises a question whether we can find a solution to the above system for a given prior $P$ and $(\oline{\mu},\oline{\Si})$. In our analysis and during numerical experiments, we employ~\eqref{eq:match_meanvar} only with Gaussian priors, thus we can always find a solution provided the matrix $\oline{\Si}$ is non-negative definite.

\section{Model problems: linear slow-fast SDEs\label{se:LinSDEs_Stab}}
Having described the micro-macro acceleration algorithm, we now specify the model systems on which we investigate the stability of the method. We focus on a~particular subclass of linear SDEs with a large gap in the spectrum of their drift matrix that yields a time scale separation between certain variables of the equation. This property is exemplified in the following system that can be seen as a ``linearization'' of~\eqref{eq:sfSDE} from Example~\ref{ex:sfSDE}.
\begin{example}[Linear slow-fast systems]\label{ex:test_sf}
Consider simulation of $X_t=(Y_t,Z_t)$ given by
\begin{gather}\label{eq:test_sf}
\begin{aligned}
\der{Y}_t &= (a_{11}Y_t + a_{12}Z_t)\der{t} &+\ \hphantom{\vep^{-1/2}}\der{U}_t,\\[0.2em]
\der{Z}_t &= \hphantom{a_{21}Y_t\ } \vep^{-1}a_{22}Z_t\,\der{t} &+\ \vep^{-1/2}\der{V}_t,
\end{aligned}
\end{gather}
with $a_{11},a_{12},a_{22}\in\R$ and $\vep>0$. Here the drift and diffusion matrices read
\begin{equation*}
A =
\begin{bmatrix}
a_{11} & a_{12}\\
0 & a_{22}/\vep
\end{bmatrix},\quad B =
\begin{bmatrix}
1 & 0\\
0 & 1/\vep
\end{bmatrix}.
\end{equation*}
The spectrum $\spm{A} = \{a_{11},a_{22}/\vep\}$ resides in the left-half complex plane if $a_{11},a_{22}<0$. As $\vep$ tends to $0$, when the system becomes \emph{slow-fast}, we see that $|a_{22}/\vep|\gg |a_{11}|$ and $\spm{A}$ splits into two distant parts. This rupture does not interfere with the stability of the exact solution to equation~\eqref{eq:test_sf} but, as described in Section~\ref{se:mM_invdistr}, it has profound consequence on the stability of the numerical schemes as the system becomes stiff.
\end{example}

In Section~\ref{se:spec-drift}, we generalize the qualitative behavior of~\eqref{eq:test_sf} to a linear system~\eqref{eq:test} below. Equation~\eqref{eq:test} does not generally decouple into a number of independent complex SDEs, as a simultaneous dizaonalization of the matrices $A$ and $B$ in equation~\eqref{eq:test} is not normally possible for multiscale systems. In Section~\ref{se:block_diag}, we circumvent this problem by only performing a block diagonalization of the drift matrix; a procedure that is always possible to adopt for equation~\eqref{eq:test}.

To introduce the model problem, let us consider the standard $d$-dimensional Wiener process $W_t$ on $[0,\infty)$ and constant, nonrandom matrices $A\in\mat{d}{d}$ and $B\in\mat{d}{d}$, which we call the drift and diffusion matrix, respectively. We make a~standing assumption that the initial random vector $X_{0}$ is independent of $W$ and that $B$ is symmetric positive definite. To make the formulas in later sections more concise, we consider the following linear vector stochastic differential equation in $\R^d$, see also Remark~\ref{re:lin},
\begin{equation}\label{eq:test}
\der{X}_t = AX_t\der{t} + \sqrt{B}\der{W}_t,\quad X_{0}\ \text{given},
\end{equation}
where $\sqrt{B}$ is the symmetric positive definite square root of $B$~\cite[Thm.~7.2.6]{HorJoh2013}. The positive definiteness of the diffusion matrix $B$ ensures that the distribution of $X_t$ stays non-degenerate whenever this is the case for $X_0$. Considering the goal of this manuscript, covering degenerate diffusions does not add any substantial insight to the stability analysis, whereas being able to use densities makes derivations more lucid.

\begin{remark}[From linear systems in the narrow sense to~\eqref{eq:test}]\label{re:lin}
In~\eqref{eq:test}, we fix the dispersion coefficient to the square root of a $d\times d$ diffusion matrix $B$. In the context of linear SDEs with constant coefficients, it is customary to consider equation (called linear in the narrow sense~\cite[p.~110]{KloPla1999})
\begin{equation}\label{eq:lin}
\der{X}_t = AX_t\der{t} + \oline{B}\der{\oline{W}}_t,
\end{equation}
where $\oline{B}\in\mat{d}{d'}$ and $\oline{W}_t$ is a $d'$-dimensional Brownian motion. To preserve nondegeneracy, it suffices to assume\footnote{More generally, the sufficient and necessary condition requires that the pair $(A,\oline{B})$ is \emph{controllable}, see~\cite[pp.~355-356]{KarShr1998}. Assuming controllability only, we may not always be able to reduce~\eqref{eq:lin} to~\eqref{eq:test}. However, as~\eqref{eq:test} serves as a convenient test equation for asymptotic stability, this discrepancy is of minor importance.} that the rank of $\oline{B}$ equals $d$, yielding $d\leq d'$.
When this is the case, the matrix $B=\oline{B}\,\tp{\oline{B}}$ is symmetric positive definite~\cite[p.~440]{HorJoh2013}, $\oline{C}=(\sqrt{B})^{-1}\oline{B}\in\mat{d}{d'}$ has orthogonal rows,
and we can decompose $\oline{B}=\sqrt{B}\,\oline{C}$. Due to orthogonality, the process $W_t=\oline{C}\,\oline{W}_t$ is a $d$-dimensional Wiener process, and plugging the decomposition of $\oline{B}$ into~\eqref{eq:lin} brings us back to~\eqref{eq:test}.
\end{remark}

Let us shortly recall asymptotic stability of the exact trajectories of~\eqref{eq:test}, see~\cite{KarShr1998}. Provided that $X_0$ has the first two moments finite and under the assumption that all the eigenvalues of $A$ have negative real parts, the integral
\begin{equation}\label{eq:test_equivar}
V_\infty \doteq \int_0^\infty e^{uA}Be^{u\tp{A}}\der{u}
\end{equation}
converges, is positive definite, and it holds that
\begin{equation}\label{eq:test_meanvar_asym}
\lim_{t\to\infty}\Exp{[X_t]} = 0,\quad \lim_{t\to\infty}\Var{[X_t]} = V_\infty.
\end{equation}
When $X_0$ is a~Gaussian vector, by linearity, the solution $X_t$ of~\eqref{eq:test} stays Gaussian as well. In consequence, employing the formula for the Kullback-Leibler divergence between two Gaussian vectors~\cite[p.~189]{Kullback1978} and using~\eqref{eq:test_meanvar_asym} to pass to the limit as $t$ goes to $\infty$, we~obtain
\begin{equation*}
\kld(\Law{X_t}\|\nd_{0,V_\infty}) = \frac{1}{2} \Big[\tr{V_\infty^{-1}\Var{[X_t]}} - d + \tp{\Exp{[X_t]}}V_\infty^{-1}\Exp{[X_t]} + \ln\frac{\|V_\infty\|}{\|\Var{[X_t]}\|}\Big]\ \convto\ 0.
\end{equation*}
Thus the distribution of $X_t$ converges to $\nd_{0,V_\infty}$, the invariant distribution of~\eqref{eq:test}, in the Kullback-Leibler divergence.

For the linear stability analysis of the micro-macro acceleration method, we are concerned with the asymptotics of numerical trajectories in $\prob^d$ of equation~\eqref{eq:test} under the micro-macro acceleration method discussed in Section~\ref{se:mM_accel}. Here and in what follows, we look at the asymptotic stability as a~property of the laws $P^{\De t}_n$ of time marginals; we require that these laws converge (with an appropriate notion of convergence for probability measures) to some fixed probability distribution $P_{\infty}$ as time goes to infinity: reminiscent of the convergence of $\Law{X_t}$ to  $\nd_{0,V_\infty}$. In Section~\ref{se:mM_invdistr}, we find $P_{\infty}$ as an invariant measure of the underlying inner integrator: the Euler-Maruyama scheme.

\subsection{Spectral properties of the drift matrix and slow-fast decomposition\label{se:spec-drift}}
We consider a~particular subclass of linear SDEs~\eqref{eq:test} for which the spectrum of the drift matrix $A$, denoted $\spm{A}$, lies in the (open) left-half complex plane $\C_{-}$ and decomposes into two pieces with a large gap between the moduli of eigenvalues. More precisely, we posit:
\begin{assumption}\label{as:test_mscale}
There exist two disjoint subsets $\Om^s,\Om^f$ of the left-half complex plane $\C_{-}$ such that $\spm{A}=\Om^s\cup\Om^f$ and
\begin{equation}\label{eq:test_mscale}
\max\{|\ka|:\ \ka\in\spm{A}\cap\Om^s\}\ll\min\{|\ka|:\ \ka\in\spm{A}\cap\Om^f\},
\end{equation}
where by ``$\ll$'' we mean ``much smaller'' and leave the quantification of this statement open to interpretation contingent on a particular case at hand.
\end{assumption}

In what follows, we call elements of $\Om^s$ the \emph{slow eigenvalues} of equation~\eqref{eq:test} and denote their cardinality by $d_s$; likewise, the $d_f$ \emph{fast eigenvalues} are the members of $\Om^f$. This terminology relates to the time scales present in system~\eqref{eq:test}. To see this, consider for a moment the deterministic part of~\eqref{eq:test}: $\dot{x}=Ax$. The solution $x(t)=e^{tA}x(0)$ decays to zero as $t$ goes to infinity, since all eigenvalues have negative real part. We can discern $d$ time scales in this decay that are quantified by their exponential rate of decay, proportional to $\re(\ka)$, and the frequency of oscillations, proportional to $\im(\ka)$, where $\ka$ runs through all eigenvalues of $A$. Therefore, the condition~\eqref{eq:test_mscale} imposes two (groups of) vastly different time scales of $x(t)$, corresponding to the gap in $\spm{A}$: \emph{slow}, for which both $\re(\ka)$ and $\im(\ka)$ are small, and \emph{fast}, for which either $\re(\ka)$ or $\im(\ka)$ is large. We can expect these different time scales  to prevail in the stochastic system, since~\eqref{eq:test} results from agitating the deterministic part with additive noise. We discuss in more detail the time scales present in the stochastic case in Appendix~\ref{se:tscale_dynsys}.

With the decomposition of $\spm{A}$ comes naturally the representation of the state space $\R^d$ as the direct sum of a $d_s$- and a $d_f$-dimensional subspaces, the invariant subspaces associated to the slow and fast eigenvalues, respectively.
Indeed, by the Spectral Decomposition Theorem, there exist orthogonal projections $\Pi^s,\Pi^f$ from $\R^d$ to $d_s$- and $d_f$-dimensional subspaces of $\R^d$, respectively, such that
\begin{equation}\label{eq:spm_dec}
\R^d = \Pi^s\R^d \oplus \Pi^f\R^d,\quad\text{and}\quad
A = D^s\Pi^s \oplus D^f\Pi^f,
\end{equation}
where $D^s$ and $D^f$ are block-diagonal, square matrices of size $d_s$ and $d_f$ (see also Section~\ref{se:block_diag}). Accordingly, for any $x\in\R^{d}$ we write $x=y\oplus z\in\R^d$ where $y=\Pi^sx\in\R^{d_s}$ denotes the $d_s$-dimensional vector of \emph{slow variables} and $z=\Pi^fx\in\R^{d_f}$ the $d_f$-dimensional vector of \emph{fast variables}.
\begin{remark}[On the connection with coarse-graining]
Decomposition~\eqref{eq:spm_dec} is within the framework of coarse-graining, where the coarse-graining operator is given by the projection $\Pi^s$ onto the slow variables. In this approach, one aims to approximate the evolution of the projected process $\Pi^sX_t$ by building a~closed Markov dynamics on $\Pi^s\R^d$~\cite[Sect.~3]{HarKalKatPle2016}. In our case, we look only at the moments of the projected process $\Pi^sX_t$ and the micro-macro acceleration method allows us to build the closure ``on the fly''.
\end{remark}

Slow-fast systems like equation~\eqref{eq:test_sf} constitute an example of models with vast time-scale differences \cite{GivKupStu2004}. The feature~\eqref{eq:test_mscale} appears generally in \emph{multiscale systems} where one couples macroscopic evolution equations to microscopic ones to increase the accuracy of description of a particular model~\cite{BlaBriLegLel2010}. From the computational perspective, this coupling produces \emph{stiff} system of equations. According to this interpretation, equation~\eqref{eq:test} together with condition~\eqref{eq:test_mscale} constitutes a~simple model for such a~situation and we test the micro-macro acceleration method against its asymptotic behaviour.

Even though the effects of microscopic variables play an important role in obtaining sufficient accuracy of the simulation, the computational interest of the multiscale systems often lies in the macroscopic observables, i.e., the averages over the slow variables. The evolution of fast and slow variables proceeds on very different time scales, which makes the simulation a~difficult task. To overcome this issue, the micro-macro acceleration method of Section~\ref{se:mM_accel_alg_linSDE} aims at breaking the time-step barrier for the slow variables.

\subsection{Imposing block-diagonal structure on the drift}\label{se:block_diag}
As we mentioned in the introduction, no connection between the scalar SDEs with additive noise and the linear stochastic system~\eqref{eq:test} generally exists. Only when matrices $A$ and $B$ are \emph{simultaneously dizaonalizable} can we decouple~\eqref{eq:test} into a~number of independent complex SDEs with additive noise. However, simultaneous dizaonalization of drift and diffusion matrices never occurs in the context of multiscale models, compare with Example~\ref{ex:test_sf}, and we cannot use this assumption (nevertheless, see~\cite{KomMit1995} for the use of simultaneous diagonalizability in the case of multiplicative noise). Here, we only perform a block dizaonalization of the drift matrix instead; a~procedure always possible to adopt for equation~\eqref{eq:test}. This procedure simplifies derivations, allowing us to obtain certain analytical results about the asymptotics of the micro-macro acceleration method.

By Assumption~\ref{as:test_mscale}, we can find a~non-singular matrix $C\in\mat{d}{d}$, such that
\begin{equation*}
A = C^{-1}DC,
\end{equation*}
where $D=\diag(D^s, D^f)$ and $D^s\in\mat{d_s}{d_s}$, $D^f\in\mat{d_f}{d_f}$, $d_s+d_f=d$. The existence of $C$ is based on the \emph{real Jordan canonical form}, which gives the block diagonalization into a~number of full real Jordan blocks corresponding to either one real eigenvalue, or a~pair of complex eigenvalues. By rearranging, we can always combine multiple Jordan blocks into a~bigger block. Here, we align the blocks $D^s$ and $D^f$ along the decomposition of the spectrum $\spm{A}$ into the slow modes in $\Om^s$ and the fast ones in $\Om^f$, respectively.

By the It\^{o} formula, the stochastic derivative of the process $\otilde{X}_t\doteq CX_t$, where $X_t$ solves~\eqref{eq:test}, satisfies $\der{\otilde{X}}_t=CAX_t\der{t}+C\sqrt{B}\der{W}_t$. As a~result, $\otilde{X}_t$ is the solution to the following linear SDE
\begin{equation}\label{eq:bdiag}
\der{\otilde{X}}_t = D\otilde{X}_t\der{t}+\sqrt{\otilde{B}}\der{\otilde{W}}_t,\quad\otilde{X}_0=CX_0,
\end{equation}
where $\otilde{B}\doteq \tp{(C\sqrt{B})}C\sqrt{B}\in\mat{d}{d}$ (consider $\oline{B}=C\sqrt{B}$ in Remark~\ref{re:lin}) and $\otilde{W}_t$ is an orthogonal transformation of $W_t$.
The main advantage of the SDE system~\eqref{eq:bdiag}, with block diagonal drift matrix, is that it allows an analytical stability analysis.

Note that our standing assumptions still hold for~\eqref{eq:bdiag} -- the random variable $CX_0$ keeps being independent of the Wiener process $\otilde{W}_t$ and, since $C\sqrt{B}$ has full rank, the matrix $\otilde{B}$ is positive definite~\cite[p.~440]{HorJoh2013}. Moreover, the moments of $X_t$ and $\otilde{X}_t$ are connected through direct formulas involving the matrix $C$. In particular, we have $\Exp[][\otilde{X}_t]=C\Exp[][X_t]$ and $\Var[\otilde{X}_t] = C\Var[X_t]\tp{C}$, so that the asymptotic relations~\eqref{eq:test_meanvar_asym} hold for~\eqref{eq:bdiag} with limiting variance $CV_\infty\tp{C}$. Most importantly, however, the drift matrices of~\eqref{eq:test} and~\eqref{eq:bdiag} have the same eigenvalues, thus the asymptotic stability of both SDE systems is equivalent.

\begin{remark}[On issues with diagonaliation in the complex field]
Employing complex linear transformation $C\in\mat[C]{d}{d}$, we could dizaonalize the drift matrix $A$ whenever it had a~full set of eigenvectors. This procedure would tend to resemble the corresponding approach for linear stability of ODEs, which we discussed at the beginning of Section~\ref{se:mM_accel_linSDE}. However, applying complex transformations to the drift matrix of~\eqref{eq:test} yields a~number of difficulties in the interpretation of the resulting complex linear SDE systems.

First, we cannot keep equation~\eqref{eq:bdiag} as our test model, since, with complex $C$, the new drift matrix $D$ and diffusion matrix $\otilde{B}$ are in general complex too, while the Brownian motion $W_t$ stays real. Therefore, equation~\eqref{eq:bdiag} is not a~complex linear SDE in this case.
To transform~\eqref{eq:bdiag} into a complex linear SDE, we could define $\ohat{B}=\tp{(C\sqrt{B}C^{-1})}C\sqrt{B}C^{-1}$, and consider the complex process $\ohat{W}_t=C\otilde{W}_t$, so that the transformed equation reads
\begin{equation*}
\der{\ohat{X}}_t = D\ohat{X}_t\der{t} + \sqrt{\ohat{B}}\der{\ohat{W}}_t.
\end{equation*}
The problem is that $\ohat{W}_t$ is not a~Brownian motion, unless $C$ is orthogonal. It is possible to dizaonalize $A$ with an orthogonal matrix only if $A$ is symmetric in the first place. However, multiscale systems lack such symmetry property basically by their definition, see~\eqref{eq:test_sf} for an example.
More generally, also connecting the stability of real and complex linear equations remains problematic. The invariant distribution of a~complex equation is the complex normal distribution whose density has the form
\begin{equation*}
\frac{1}{\pi^d\det H}\exp\!\big(\!-\htp{\xi}H^{-1}\xi\big),\quad \xi\in\C^d,
\end{equation*}
where $\htp{\xi}$ is the~complex (Hermitian) conjugate and $H$ is a~complex positive-definite matrix (see~\cite[Chapter~2]{AndHojSorEri1995} for more on the multivariate complex normal distribution). However, the transformation $C\nd_{0,V_\infty}$, with non-singular $C\in\mat[C]{d}{d}$, of the real invariant normal distribution will generally follow a~degenerate complex normal distribution.
By using only block dizaonalization, and remaining in the real field, we bypass all these problems.
\end{remark}

\section{Micro-macro acceleration of linear slow-fast SDEs\label{se:mM_accel_linSDE}}

\subsection{Matching with slow mean and variance via Kullback-Leibler divergence}\label{se:match_cg}
The matching procedures introduced in Section~\ref{se:match} use the first two moments of the full distribution.
However, as we describe in Section~\ref{se:spec-drift}, in the simulation of the multi-scale systems, we are usually interested in the evolution of $d_s$ slow variables. Therefore, it is reasonable to combine the micro-macro acceleration method with coarse-graining: employing only the macroscopic states related to the slow variables in decomposition~\eqref{eq:spm_dec} during extrapolation. The matching procedures from Section~\ref{se:match} can be easily modified to encompass this case. The sole change in the formulas amounts to replacing the identity $\Pi$ on $\R^d$ with the orthogonal projection onto the slow variables, $\Pi^s\from\R^d\to\R^{d_s}$ from~\eqref{eq:spm_dec}, and using the marginal mean $\oline{\mu}^s$ and marginal covariance $\oline{\Si}^s$ in the constraints.
Therefore, the matching with slow mean only reads
\begin{equation}
\label{eq:match_smean}\tag*{$\mrm{(ME)^s}$}
\match(\oline{\mu}^s,P) = \argmin_{Q\in\prob^d}\kld(Q\|P),\ \text{constrained on}\ \Exp[Q][\Pi^s]=\oline{\mu}^s,
\end{equation}
and the matching with mean and variance becomes
\begin{equation}
\label{eq:match_smeanvar}\tag*{$\mrm{(MEV)^s}$}
\match(\oline{\mu}^s,\oline{\Si}^s,P) = \argmin_{Q\in\prob^d}\kld(Q\|P),\ \text{constrained on}\ \left\{\begin{array}{l}
\Exp[Q][\Pi^s]=\oline{\mu}^s\\
\Var[Q][\Pi^s] = \oline{\Si}^s.
\end{array}\right.
\end{equation}

For $d_s=d$, we retrieve the minimisation programs from the previous section. For $d_s<d$, this change further reduces the number of nonlinear equations for Lagrange multipliers to $d_s$ in the case of matching with marginal mean, and $d_s(d_s+3)/2$ for matching with marginal mean and variance. In this fashion, we obtain a~significant reduction in the computational effort whenever $d_s$ is much smaller that $ d$ -- an~expected feature of multi-scale systems.

We now demonstrate how the matching procedures with $d_s$-marginal moments on $\R^d$ connect back to the matchings with full moments on $\R^{d_s}$, the space of slow variables. Let $P^s$ be the marginal distribution of $P$ on the slow subspace $\R^{d_s}$. Then, for every $y\in\R^{d_s}$, there exists a unique conditional distribution $P^{f|s}({\cdot}|y)$ on $\R^{d_f}$~\cite[Thm.~10.2.2]{Dudley2002}, the distribution of the fast variables conditioned on fixing the slow ones at~$y$, and the identity $P = P^s\otimes P^{f|s}$ holds.\footnote{The identity $P = P^s\otimes P^{f|s}$ means that for every measurable rectangle $U\times V\subseteq\R^{d_s}\oplus\R^{d_f}$ and Borel function $g\from U\times V\to\R$, it holds $\int_{U\times V}g(x)\,P(\der{x})=\int_{U}\int_{V}g(y,z)\,P^{f|s}(\der{z}|y)\,P^s(\der{y})$.}  Now, we can easily see that we obtain the matching based on slow moments by (i) performing the full matching on the space of slow variables only with prior $P^s$; and (ii) multiplying the result with the conditional distribution $P^{f|s}({\cdot}|y)$ of the fast variables, given the slow ones. We detail this observation for~\ref{eq:match_smean}; the case of~\ref{eq:match_smeanvar} follows similarly.

\begin{proposition}\label{pro:match_cg}
Let $P\in\prob^d$ be a prior distribution and $\oline{\mu}^s\in\R^{d_s}$ a slow marginal mean. Then it holds
\begin{equation*}
\match(\oline{\mu}^s, P) = \match(\oline{\mu}^s, P^s)\otimes P^{f|s},
\end{equation*}
where $\match(\oline{\mu}^s, P)$ solves~\ref{eq:match_smean} and $\match(\oline{\mu}^s, P^s)$ solves~\eqref{eq:match_mean} from page~\pageref{eq:match_mean} with $d=d_s$.
\end{proposition}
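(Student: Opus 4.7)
The plan is to exploit the chain rule for Kullback-Leibler divergence along the decomposition $P = P^s\otimes P^{f|s}$, which turns the matching problem on $\R^d$ into two decoupled minimization problems: one on $\R^{d_s}$ carrying the slow-mean constraint, and one (unconstrained) over conditional distributions on $\R^{d_f}$.

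First, I would restrict attention to candidates $Q\in\prob^d$ that are absolutely continuous with respect to $P$ (otherwise $\kld(Q\|P)=\infty$, so such $Q$ cannot be optimal). For such $Q$, I write $Q = Q^s\otimes Q^{f|s}$ by the disintegration theorem invoked in the paragraph preceding the statement. The constraint $\Exp[Q][\Pi^s]=\oline{\mu}^s$ depends only on the slow marginal $Q^s$ and reads $\Exp[Q^s][\Pi]=\oline{\mu}^s$ in terms of the identity on $\R^{d_s}$.

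Next, I invoke the chain rule for the Kullback-Leibler divergence, namely
\begin{equation*}
\kld(Q\|P) = \kld(Q^s\|P^s) + \int_{\R^{d_s}} \kld\bigl(Q^{f|s}(\,\cdot\,|y)\,\big\|\,P^{f|s}(\,\cdot\,|y)\bigr)\,Q^s(\der{y}),
\end{equation*}
which follows by writing $\rnder{Q}{P}(y,z) = \rnder{Q^s}{P^s}(y)\cdot\rnder{Q^{f|s}(\cdot|y)}{P^{f|s}(\cdot|y)}(z)$, taking logarithms, and integrating against $Q$. Both terms on the right-hand side are non-negative, the first depends only on $Q^s$, and the second depends on the family $\{Q^{f|s}(\,\cdot\,|y)\}$ and on $Q^s$ but is unconstrained by the prescribed slow mean.

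I would then argue that minimization decouples. To minimize the integral term, for every $y$ we set $Q^{f|s}(\,\cdot\,|y) = P^{f|s}(\,\cdot\,|y)$, which drives each integrand to zero regardless of the choice of $Q^s$; consequently the second term can be made to vanish. What remains is the constrained minimization of $\kld(Q^s\|P^s)$ over $Q^s\in\prob^{d_s}$ subject to $\Exp[Q^s][\Pi]=\oline{\mu}^s$, whose unique solution is $\match(\oline{\mu}^s,P^s)$ by the definition~\eqref{eq:match_mean} applied in dimension $d_s$. Combining the two pieces yields $\match(\oline{\mu}^s,P) = \match(\oline{\mu}^s,P^s)\otimes P^{f|s}$.

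The main obstacle is the rigorous justification of the chain-rule decomposition of $\kld$, together with checking that the product $\match(\oline{\mu}^s,P^s)\otimes P^{f|s}$ is a legitimate element of $\prob^d$ (measurability of $y\mapsto P^{f|s}(\,\cdot\,|y)$ as a Markov kernel is precisely the regularity guaranteed by the disintegration theorem already cited). Uniqueness of the minimiser follows from strict convexity of $\kld(\,\cdot\,\|P)$ on its effective domain, so no ambiguity arises when asserting the equality of minimisers rather than merely minima.
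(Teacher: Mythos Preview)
Your argument is correct and takes a genuinely different route from the paper. The paper works with the explicit exponential-family representation of the KL-minimiser: it computes the log-partition function $A(\la,P)=\ln\Exp[P][\exp(\la\sdot\Pi^s)]$, integrates out the fast variables to show $A(\la,P)=A(\la,P^s)$, concludes that the Lagrange multiplier $\oline{\la}$ solving $\grad[\la]A(\oline{\la},P)=\oline{\mu}^s$ coincides with that of the $d_s$-dimensional problem, and then reads off from~\eqref{eq:dist_mean} that the Radon--Nikodym derivative $\rnder{Q}{P}(y,z)$ depends on $y$ only and equals $\rnder{Q^s}{P^s}(y)$. Your approach instead bypasses the Lagrange multipliers entirely by invoking the chain rule $\kld(Q\|P)=\kld(Q^s\|P^s)+\Exp[Q^s]\big[\kld(Q^{f|s}\|P^{f|s})\big]$ and observing that the constraint touches only the first summand, so the two minimisations decouple. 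Your argument is more conceptual and would transfer verbatim to any divergence satisfying such a chain rule; the paper's computation, on the other hand, produces as a byproduct the explicit density identity~\eqref{eq:match_cg}--\eqref{eq:match_cg_dens}, which is convenient for the Gaussian calculations in Section~\ref{se:match_smean_gauss}. Both proofs rely on the same disintegration $P=P^s\otimes P^{f|s}$, and the measurability caveat you flag is handled in the paper by the reference to~\cite[Thm.~10.2.2]{Dudley2002}.
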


\begin{proof}
Consider the log-partition function $A(\la,P)$, where $\la\in\R^{d_s}$, related to~\ref{eq:match_smean}. Employing~\cite[Thm.~10.2.1]{Dudley2002}, we compute
\begin{gather}\label{eq:logpart_marg}
\begin{aligned}
A(\la, P)
&= \ln\int_{\R^d}\exp\!\big(\la\sdot\Pi^sx\big)\,P(\der{x})
= \ln\!\Big\{\int_{\R^{d_s}}\int_{\R^{d_f}}\exp\!\big(\la\sdot y\big)\,P^{f|s}(\der{z}|y)P^s(\der{y})\Big\}\\
&= \ln\!\Big\{\int_{\R^{d_s}}\exp\!\big(\la\sdot y\big)P^s(\der{y})\Big\}%
= \ln\Exp[P^s]\big[\exp(\la\sdot\Pi^s)] = A(\la, P^s),
\end{aligned}
\end{gather}
where $A(\la,P^s)$ is the log-partition function of~\eqref{eq:match_mean} with $d=d_s$. Therefore, the vector of Lagrange multipliers $\oline{\la}\in\R^{d_s}$ of $\match(\oline{\mu}^s,P)$, the coarse-grained matching~\ref{eq:match_smean}, solves
\begin{equation*}
\grad[\la]A(\oline{\la}, P^s) = \oline{\mu}^s,
\end{equation*}
so it corresponds to $\match(\oline{\mu}^s,P^s)$, the full matching on the space $\R^{d_s}$ of slow variables.

If $Q=\match(\oline{\mu}^s,P)\in\prob^d$ and $Q^s=\match(\oline{\mu}^s,P^s)\in\prob^{d_s}$, substituting $y=\Pi^sx$ for $x$ in the right-hand side of~\eqref{eq:dist_mean} we get the following identity
\begin{equation}\label{eq:match_cg}
\rnder{Q}{P}(y,z) = \exp\big(\oline{\la}\sdot y - A(\oline{\la},P^s)\big) = \rnder{Q^s}{P^s}(y).
\end{equation}
Thus, the marginal density $(\rnder{Q}{P})^s$ equals $\rnder{Q^s}{P^s}$ and the conditional $(\rnder{Q}{P})^{f|s}({\cdot}|y)$ is constant, equal to $1$, for all $y\in\R^{d_s}$. Since $Q = (\rnder{Q}{P})^sP^s\otimes (\rnder{Q}{P})^{f|s}P^{f|s}$, this proves the formula in the statement of the proposition.
\end{proof}

Therefore, we can obtain the coarse-grained matching by performing the full matching on the space of slow variables $\R^{d_s}$ only, and reusing the conditional distributions of the prior $P^{f|s}({\cdot}|y)$.
Moreover, if the prior $P$ has density $p$ with respect to the Lebesgue measure, $P=p\der{x}$, the matching density reads $q(x) = \exp\!\big(\oline{\la}\sdot\Pi^sx-A(\oline{\la},P)\big)p(x)$, and, using~\eqref{eq:match_cg}, we have
\begin{equation}\label{eq:match_cg_dens}
q(y,z) = \exp\!\big(\oline{\la}\sdot y-A(\oline{\la},P^s)\big)p^s(y)\,p^{f|s}(z|y) = q^s(y)\,p^{f|s}(z|y),
\end{equation}
where $q^s$ is the density on $\R^{d_s}$ of $\match(\oline{\mu}^s,P^s)$.

\subsection{Algorithm formulation for linear slow-fast SDEs}\label{se:mM_accel_alg_linSDE}
In this section, we specify the micro-macro acceleration method, presented in Section~\ref{se:mM_accel}, to the simulation of linear SDE~\eqref{eq:test} with slow-fast drift matrix satisfying~\eqref{eq:test_mscale}.
The extrapolation and matching stages utilize macroscopic states of slow variables on $\R^{d_s}$, $d_s\leq d$. (When $d_s=d$, we encompass the micro-macro acceleration on full state space.)

\textit{Stage 1: Propagation of microscopic laws.}
As an \emph{inner integrator}, which propagates equation~\eqref{eq:test} forward in time, we use the \emph{Euler-Maruyama} scheme
\begin{equation}\label{eq:em_test}
\prop^{\de t}(A, B, X^{\de t}_{n,k},\de W_{k+1}) = (\Id+\de tA)X^{\de t}_{n,k} + \sqrt{B}\de W_{k+1}.
\end{equation}

\textit{Stage 2: Restriction to a~finite number of observables.}
We consider two cases of restriction that give slow macroscopic states as $\mbf{m}^s_{n,k} = \res^s(P^{\de t}_{n,k})$. We use either the slow mean
\begin{equation}\label{eq:res_smean}
\res^s(Q)=\Exp[Q][\Pi^s],
\end{equation}
and then $L=d_s$, or the slow mean and variance
\begin{equation}\label{eq:res_smeanvar}
\res^s(Q) = (\Exp[Q][\Pi^s], \Var[\!Q][\Pi^s]),
\end{equation}
with $L=d_s(d_s+3)/2$, where $\Pi^s$ is the projection on the space of slow variables.

\textit{Stage 3: Extrapolation of macroscopic states.}
In the following, we solely use forward Euler extrapolation based on the first and the last macroscopic states
\begin{equation}\label{eq:lin_extrap}
\mbf{m}^s_{n+1} = \mbf{m}^s_{n,K} + \frac{\De t-K\de t}{K\de t}(\mbf{m}^s_{n,K}-\mbf{m}^s_{n,0}).
\end{equation}

\textit{Stage 4: Matching.}
We use the matching procedure described in Section~\ref{se:match_cg}
\begin{equation*}
\match(\mbf{m}^s_{n+1},P^{\de t}_{n,K}) = \argmin_{Q}\kld(Q \| P^{\de t}_{n,K}),\ \text{constrained on}\ \res^s(Q)=\mbf{m}^s_{n+1}.
\end{equation*}

\subsection{Invariant distribution of the micro-macro acceleration method}\label{se:mM_invdistr}
In this section, we demonstrate that the micro-macro acceleration method possesses the same invariant distribution as the underlying inner integrator (the Euler-Maruyama method): a centred Gaussian $\nd_{0,V_\infty^{\de t}}$ with variance (compare with~\eqref{eq:test_equivar})
\begin{equation*}
V_\infty^{\de t} = \de t\sum_{j=0}^{\infty}(\Id+\de tA)^jB(\Id+\de t\tp{A})^j,
\end{equation*}
despite extrapolating only the slow macroscopic states of distributions in Stage~3 of the algorithm.
The sum on the right-hand side converges whenever $\spm{\Id+\de tA}\subset\dsk(0,1)$, where we denote by $\dsk(z,r)$ the disk in the complex plane with center $z$ and radius $r$. Notice that the variance depends on the micro time step but not on the macro time step $\De t$.

Having this invariant measure allows us to correctly formulate the numerical counterpart of asymptotic stability: whenever the microscopic time step $\de t$ is stable, i.e.~$\spm{\Id+\de tA}\subset\dsk(0,1)$, we seek for a condition on the macroscopic time step $\De t$ such that
\begin{equation*}
P^{\De t}_n\convto\nd_{0,V_\infty^{\de t}},\quad\text{as}\ n\goesto\infty,
\end{equation*}
where $P^{\De t}_n$ is the law generated by the micro-macro acceleration method.

To prove the invariance, let us assume that the initial distribution at time $t_n$ satisfies $P_n^{\De t}=\nd_{0,V_\infty^{\de t}}$ and trace the propagation of this distribution through the stages of the algorithm to demonstrate that $P_{n+1}^{\De t} = \nd_{0,V_\infty^{\de t}}$ as well. In a nutshell: though after restriction to slow moments the algorithm has no control over the fast marginal of the solution, since $\nd_{0,V_\infty^{\de t}}$ is invariant measure of the inner integrator, the extrapolation becomes constant and the matching, due to its projective property, reproduces to the prior which is exactly $\nd_{0,V_\infty^{\de t}}$. In more details:

\textit{Stage 1: Propagation of microscopic laws.}
Since $X^{\de t}_{n,0}\sim\nd_{0,V_\infty^{\de t}}$ and this is an invariant measure of the Euler-Maruyama scheme, the laws of $X^{\de t}_{n,k}$ do not change and it holds
\begin{equation*}
P_{n,1}^{\de t} = \ldots = P_{n,K}^{\de t} = \nd_{0,V_\infty^{\de t}}.
\end{equation*}

\textit{Stage 2: Restriction to a~finite number of slow observables.}
In consequence, the macroscopic states $\mbf{m}^s_{n,k}=\res^s(P^{\de t}_{n,k})$ will be equal for all $k=0,\dotsc,K$: either $0$ for the restriction to the slow mean~\eqref{eq:res_smean} or $(0,V_\infty^{\de t,s})$, where $V_\infty^{\de t,s}$ is the variance of the slow marginal, for the restriction to the slow mean and variance~\eqref{eq:res_smeanvar}.

\textit{Stage 3: Extrapolation of macroscopic states.}
Particularly, $\mbf{m}^s_{n,0} = \mbf{m}^s_{n,K}$ and thus the linear extrapolation~\eqref{eq:lin_extrap} yields $\mbf{m}^s_{n+1} = \mbf{m}^s_{n,K}$.

\textit{Stage 4: Matching.}
Finally, the matching $\match(\mbf{m}^s_{n+1},P_{n,K}^{\de t})$ is the identity because the constraints are already satisfied by the prior and thus the optimization procedures~\ref{eq:match_smean} and~\ref{eq:match_smeanvar} from page~\pageref{eq:match_smean} attain their minimum at $P_{n,K}^{\de t}$. Therefore,
\begin{equation*}
P^{\De t}_{n+1}=P_{n,K}^{\de t}=\nd_{0,V_\infty^{\de t}}.
\end{equation*}

In Section~\ref{se:mM_stab_gauss}, we initialise the micro-macro acceleration scheme with Gaussian random variables and establish the convergence of mean and variance of the numerical solution. Similarly as before, convergence of the first two moments yields the convergence to invariant distribution in Kullback-Leibler divergence.

\section{Stability of micro-macro acceleration with Gaussian initial conditions}\label{se:mM_stab_gauss}

We are now ready to discuss the stability of the micro-macro acceleration method when starting from Gaussian initial conditions. In Section~\ref{se:match_smean_gauss}, we first discuss the matching itself. Afterwards, we look at a micro-macro acceleration algorithm that only extrapolates the mean (Section~\ref{se:mM_gauss_smean}). Finally, we consider extrapolation of both mean and variance (Section~\ref{se:mM_gauss_smeanvar}).

\subsection{Matching a~multivariate normal distribution with marginal mean}\label{se:match_smean_gauss}
When we match a~Gaussian prior with a~new mean, the minimum Kullback-Leibler divergence matching produces a~normal distribution with the same variance and the new mean swapped for the original one~\cite{CovTho2005,Kullback1978}. In this section, we extend this fact to the matching with a~marginal mean -- when we consider only a~part of variables.

Let $\nd_{\mu,\Si}$ be the normal distribution on $\R^{d}$, with $d=d_s+d_f$, having vector mean and positive-definite covariance matrix (in block form)
\begin{equation*}
\mu =
\begin{bmatrix}
\mu^s\\
\mu^f
\end{bmatrix},\quad
\Si=
\begin{bmatrix}
 \Si^s & \mrm{C} \\[0.5em]
 \tp{\mrm{C}} & \Si^f
 \end{bmatrix},
\end{equation*}
where $\mu^s\in\R^{d_s}$, $\mu^f\in\R^{d_f}$ are the means and $\Si^s\in\R^{d_s\times d_s}$, $\Si^f\in\R^{d_f\times d_f}$ are the variance matrices of the marginal distributions, and $\mrm{C}\in\R^{d_s\times d_f}$ contains the cross-covariances between fast and slow variables. Here, we use the notation from Section~\ref{se:mM_accel_linSDE}, where $d_s$ stands for the number of slow variables (cardinality of $\Om^s$) and $d_f$ for the number of fast ones.

Consider the restriction operator $\res^s$ that computes the $s$-marginal mean in $\R^d$, corresponding to the slow variables. That is, given a distribution $P\in\prob^d$, the restriction evaluates
\begin{equation*}
\res^s(P) = \Exp[P][\proj^s],
\end{equation*}
where $\proj^s\from\R^d\to\R^{d_s}$ is the orthogonal projection onto the slow variables from Section~\ref{se:block_diag}.

\begin{proposition}\label{pro:match_smean_gauss}
Let $\match$ be the minimum Kullback-Leibler divergence matching~\ref{eq:match_smean}, from page~\pageref{eq:match_smean}, associated to $\res^s$. Then, for any given $s$-marginal mean vector $\oline{\mu}^s\in\R^{d_ s}$
\begin{equation}\label{eq:match_smean_gauss}
\match(\oline{\mu}^s,\nd_{\mu,\Si}) = \nd_{\oline{\mu},\Si},
\end{equation}
where $\oline{\mu}=\tp{[\oline{\mu}^s,\oline{\mu}^f]}$ with $\oline{\mu}^f=\mu^f + \tp{\mrm{C}}(\Si^s)^{-1}(\oline{\mu}^s-\mu^s)$.
\end{proposition}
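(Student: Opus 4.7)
My plan is to reduce the marginal-mean matching on $\R^d$ to the simpler full-mean matching on the slow subspace $\R^{d_s}$ via Proposition~\ref{pro:match_cg}, and then reassemble the joint law from a Gaussian slow marginal and a Gaussian fast-given-slow conditional. The two ingredients required by Proposition~\ref{pro:match_cg} are readily available for a Gaussian prior: the slow marginal of $\nd_{\mu,\Si}$ is $\nd_{\mu^s,\Si^s}$, and the standard block-inversion identity for Gaussian densities gives the conditional law of the fast coordinates given the slow ones in Schur-complement form,
\begin{equation*}
P^{f|s}({\cdot}\,|\,y) \;=\; \nd_{\mu^f + \tp{\mrm{C}}(\Si^s)^{-1}(y-\mu^s),\; \Si^f - \tp{\mrm{C}}(\Si^s)^{-1}\mrm{C}},\quad y\in\R^{d_s},
\end{equation*}
so that $\nd_{\mu,\Si} = \nd_{\mu^s,\Si^s}\otimes P^{f|s}$.

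Next I would solve the full-mean matching~\eqref{eq:match_mean} for the Gaussian prior $\nd_{\mu^s,\Si^s}$ on $\R^{d_s}$. Its log-partition function is quadratic, $A(\la,\nd_{\mu^s,\Si^s}) = \la\sdot\mu^s + \tfrac{1}{2}\la\sdot\Si^s\la$, so the Lagrange equation~\eqref{eq:lagr_mean} becomes linear with unique solution $\oline{\la}=(\Si^s)^{-1}(\oline{\mu}^s-\mu^s)$; inserting this into~\eqref{eq:dist_mean} and completing the square yields $\match(\oline{\mu}^s,\nd_{\mu^s,\Si^s}) = \nd_{\oline{\mu}^s,\Si^s}$, i.e.\ the matching shifts the mean while preserving the covariance. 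Proposition~\ref{pro:match_cg} then gives
\begin{equation*}
\match(\oline{\mu}^s,\nd_{\mu,\Si}) \;=\; \nd_{\oline{\mu}^s,\Si^s}\otimes P^{f|s}.
\end{equation*}

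It remains to identify this product with the joint Gaussian $\nd_{\oline{\mu},\Si}$ of the statement. Taking $Y\sim\nd_{\oline{\mu}^s,\Si^s}$ and $Z\,|\,Y{=}y\sim P^{f|s}({\cdot}\,|\,y)$, the pair $(Y,Z)$ is jointly Gaussian because the conditional law of $Z$ is Gaussian with mean affine in $y$ and covariance independent of $y$. A direct moment computation gives $\Exp[][Y]=\oline{\mu}^s$ and $\Exp[][Z]=\mu^f+\tp{\mrm{C}}(\Si^s)^{-1}(\oline{\mu}^s-\mu^s)=\oline{\mu}^f$, while the law of total covariance yields joint blocks $\Si^s$, $\mrm{C}$, $\tp{\mrm{C}}$, and $\tp{\mrm{C}}(\Si^s)^{-1}\mrm{C}+(\Si^f-\tp{\mrm{C}}(\Si^s)^{-1}\mrm{C})=\Si^f$; the total covariance is therefore exactly $\Si$.

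The only genuine obstacle is this final Schur-complement bookkeeping: the variance of the affine conditional mean must cancel precisely against the Schur-complement covariance of $P^{f|s}$ in order to recover the prior covariance $\Si$ unchanged. That clean cancellation is essentially the substantive content of the proposition---only the mean moves, and only by the unique affine correction prescribed by the prior's cross-covariance $\mrm{C}$.
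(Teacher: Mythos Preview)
Your proposal is correct and follows essentially the same route as the paper: decompose the Gaussian prior into slow marginal and fast-given-slow conditional, invoke Proposition~\ref{pro:match_cg} to reduce to full-mean matching on $\R^{d_s}$, and then reassemble the joint Gaussian. The only cosmetic differences are that you compute the Lagrange multiplier explicitly (the paper cites~\cite{CovTho2005,Kullback1978}) and you identify the joint law via moment computations, whereas the paper rewrites the conditional mean $\mu^{f|s}(y)$ by adding and subtracting $\tp{\mrm{C}}(\Si^s)^{-1}\oline{\mu}^s$ to recognize it directly as the conditional mean of $\nd_{\oline{\mu},\Si}$.
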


\begin{proof}
By the law of total probability, the normal distribution $\nd_{\mu,\Si}$ on $\R^d$ decomposes into the product of the slow marginal and the fast conditional as follows (recall the notation from Proposition~\ref{pro:match_cg}):
\begin{equation*}
\nd_{\mu,\Si} = \nd_{\mu^s,\Si^s} \otimes \nd_{\mu^{f|s}(\cdot),\Si^{f|s}},
\end{equation*}
where the fast conditional mean and variance (conditioned on the slow variable $y$) read
\begin{equation}\label{eq:cond_meanvar}
\mu^{f|s}(y) = \mu^f+\tp{\mrm{C}}(\Si^s)^{-1}(y-\mu^s),\quad
\Si^{f|s} = \Si^f - \tp{\mrm{C}}(\Si^{s})^{-1}\mrm{C}.
\end{equation}

Fix a~new (slow) marginal mean $\oline{\mu}^s\in\R^{d_s}$. Proposition~\ref{pro:match_cg} gives
\begin{equation*}
\match\big(\oline{\mu}^s,\nd_{\mu,\Si}\big) = \match\big(\oline{\mu}^s,\nd_{\mu^s,\Si^s}\big)\otimes \nd_{\mu^{f|s}(\cdot),\Si^{f|s}}.
\end{equation*}
Since $\proj^s$ is the identity on $\R^{d_s}$, the matching $\match\big(\oline{\mu}^s,\nd_{\mu^s,\Si^s}\big)$ corresponds to~\eqref{eq:match_mean} from page~\pageref{eq:match_mean} with $d=d_s$ and yields the normal density with prior variance $\Si^s$ and new mean $\oline{\mu}^s$~\cite{CovTho2005,Kullback1978}. Therefore, we arrive at
\begin{equation}\label{eq:match_decomp}
\match\big(\oline{\mu}^s,\nd_{\mu,\Si}\big) = \nd_{\oline{\mu}^s,\Si^s} \otimes \nd_{\mu^{f|s}(\cdot),\Si^{f|s}}.
\end{equation}
By adding and subtracting $\tp{\mrm{C}}(\Si^s)^{-1}\oline{\mu}^s$ in the conditional mean~\eqref{eq:cond_meanvar} we get
\begin{equation}
\mu^{f|s}(y) = \mu^f + \tp{\mrm{C}}(\Si^s)^{-1}(\oline{\mu}^s-\mu^s) +\tp{\mrm{C}}(\Si^s)^{-1}(y-\oline{\mu}^s).
\end{equation}
Denoting $\oline{\mu}^f=\mu^f + \tp{\mrm{C}}(\Si^s)^{-1}(\oline{\mu}^s-\mu^s)$ and comparing with~\eqref{eq:cond_meanvar}, we can interpret $\mu^{f|s}(y)$ as the conditional mean of the vector
\begin{equation*}
\oline{\mu} =
\begin{bmatrix}
\oline{\mu}^s\\
\oline{\mu}^f
\end{bmatrix}\in\R^{d},
\end{equation*}
with the marginal variance $\Si^s$. Accordingly, the left-hand side of identity~\eqref{eq:match_decomp} is the law of total probability for the~normal density on $\R^{d}$ with mean $\oline{\mu}$ and covariance $\Si$. From this, \eqref{eq:match_smean_gauss} follows.
\end{proof}

\begin{remark}[On independent variables]
When the marginals of $\nd_{\mu,\Si}$ are uncorrelated ($\mrm{C}=0$), and thus independent, the matching results in a normal density with $\oline{\mu}^s$ substituted for $\mu^s$ in the prior mean $\mu$, that is $\oline{\mu} = \tp{(\oline{\mu}^s, \mu^f)}$. In this case, no correction to the (fast) marginal mean $\mu^f$ is needed. However, this situation is uninteresting in the multiscale framework, where the slow and fast variables are always dependent.
\end{remark}

\begin{remark}[On extension to Gaussian mixtures]\label{re:Gmix}
Proposition~\ref{pro:match_smean_gauss}, as well as Theorem~\ref{th:mM_gauss_smean} from the next section, can be extended to priors (and initial distributions) given by a \emph{Gaussian mixture}. These are distributions $P = \sum_{j=1}^{J}w_j\nd_{\mu_j,\Si_j}$, where $(w_1,\dotsc,w_J)$ is a vector of weights. The extension is straightforward but the computations become tedious. Nevertheless, since Gaussian mixtures are weakly dense in the space of all probability measures, such generalization significantly extends the scope of aforementioned results. Though we do not pursue this track in the paper, we include in Section~\ref{se:mM_nongauss_smenvar_num} a numerical illustration looking at the convergence of micro-macro acceleration method initiated with Gaussian mixture.
\end{remark}

\subsection{Stability bounds for the extrapolation of the marginal mean}\label{se:mM_gauss_smean}
\subsubsection{Analytical result}\label{se:mM_gauss_smean_ana}
Consider a~linear SDE in $\R^{d}$, $d=d_s+d_f$,
\begin{equation}\label{eq:test_bdiag}
\der{X}_t = DX_t\der{t} + \sqrt{B}\der{W}_t,
\end{equation}
where $B\in\R^{d\times d}$ and
\begin{equation}\label{eq:test_bdiag_struct}
D = \begin{bmatrix}
D^s & 0\\
0 & D^f
\end{bmatrix},\quad D^s\in\R^{d_s\times d_s},\ D^f\in\R^{d_f\times d_f}.
\end{equation}
According to the discussion in Section~\ref{se:block_diag}, test equation~\eqref{eq:test_bdiag}-\eqref{eq:test_bdiag_struct} already encompasses all possible behaviors of the general test equation~\eqref{eq:test} under Assumption~\ref{as:test_mscale}. In this fashion, block-diagonal systems constitute a convenient yet representative model for first stability analysis. Note also that, since $\spm{D^s}=\spm{D}\cap\Om^s$ and $\spm{D^f}=\spm{D}\cap\Om^f$, the block-diagonal form of $D$ conforms to its spectral decomposition into slow and fast eigenvalues.

We use the forward Euler extrapolation~\eqref{eq:lin_extrap} of the marginal mean in~$\R^{d_s}$ obtained from restriction~\eqref{eq:res_smean}, based on one Euler-Maruyama step of size~$\de t$ (thus $K=1$), to propagate it over a time interval of size~$\De t$. We initialize with the normal distribution $\nd_{\mu_0,\Si_0}$. Since both the microscopic scheme and extrapolation/matching preserve the Gaussianity of the initial distribution, we focus on the evolution of the mean and variance only. In the following theorem, $\spr{\bullet}$ denotes the \emph{spectral radius of a~matrix} -- the largest absolute value of its eigenvalues.

\begin{theorem}\label{th:mM_gauss_smean}
When applying the micro-macro acceleration method to the linear SDE~\eqref{eq:test_bdiag} with block-diagonal structure~\eqref{eq:test_bdiag_struct}, the mean $\mu_n$ and covariance $\Si_n$ of the resulting Gaussian law at $n$th step satisfy
\begin{equation*}
\lim_{n\to+\infty}\mu_n=0,\quad \lim_{n\to+\infty}\Si_n=V_\infty^{\de t},
\end{equation*}
whenever
\begin{equation}\label{ass:spectral_gap_discrete}
\spr{\Id^s+\De tD^s}<1 \qquad \text{and} \qquad \spr{\Id^f+\de tD^f}<1.	
\end{equation}
\end{theorem}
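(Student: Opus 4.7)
The plan is to track only the Gaussian pair $(\mu_n,\Si_n)$, since the initial law is Gaussian and each stage preserves normality: the Euler--Maruyama step is a linear Gaussian transformation, extrapolation is linear on means, and by Proposition~\ref{pro:match_smean_gauss} the slow-marginal matching of a Gaussian prior is again Gaussian. First I would derive a closed recursion. One micro step gives the prior mean $(\Id+\de tD)\mu_n$ and prior variance $\tilde\Si_n\doteq(\Id+\de tD)\Si_n(\Id+\de t\tp{D})+\de tB$; because $D$ is block-diagonal, the linear extrapolation~\eqref{eq:lin_extrap} telescopes to $\mbf{m}^s_{n+1}=(\Id^s+\De tD^s)\mu_n^s$; and applying Proposition~\ref{pro:match_smean_gauss} with target slow mean $\mbf{m}^s_{n+1}$ yields
\begin{align*}
\Si_{n+1}&=\tilde\Si_n,\\
\mu_{n+1}^s&=(\Id^s+\De tD^s)\mu_n^s,\\
\mu_{n+1}^f&=(\Id^f+\de tD^f)\mu_n^f+R_n\,\mu_n^s,
\end{align*}
with $R_n\doteq(\De t-\de t)\,\tp{\tilde{\mrm{C}}_n}(\tilde\Si_n^s)^{-1}D^s$, where $\tilde{\mrm{C}}_n$ is the slow/fast off-diagonal block of $\tilde\Si_n$. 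Since $B$ is positive definite, so is $\tilde\Si_n$ and in particular $\tilde\Si_n^s$, so $R_n$ is well defined throughout.

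Next I would dispose of the variance. The $\Si$-recursion is precisely the one generated by the pure Euler--Maruyama scheme applied to~\eqref{eq:test_bdiag}, whose unique fixed point is $V_\infty^{\de t}$ and which contracts geometrically as soon as $\spr{\Id+\de tD}<1$. By block-diagonality this factors as $\spr{\Id^s+\de tD^s}<1$ and $\spr{\Id^f+\de tD^f}<1$. The second is a hypothesis; the first follows from $\spr{\Id^s+\De tD^s}<1$, because the forward-Euler stability disk $\dsk(-1/h,1/h)$ grows as $h$ decreases, and $\de t\leq\De t$. Hence $\Si_n\to V_\infty^{\de t}$.

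For the mean, the slow part is autonomous and $\mu_n^s\to 0$ geometrically is immediate from $\spr{\Id^s+\De tD^s}<1$. Convergence of $\Si_n$ makes $(R_n)$ a bounded (in fact convergent) sequence, so the fast-mean iteration becomes a discrete linear time-varying system with a contractive autonomous part and a geometrically decaying forcing. Unrolling by discrete variation of constants,
\begin{equation*}
\mu_n^f=(\Id^f+\de tD^f)^n\mu_0^f+\sum_{k=0}^{n-1}(\Id^f+\de tD^f)^{n-1-k}R_k\,\mu_k^s,
\end{equation*}
and bounding the convolution in a sub-multiplicative norm chosen (by Gelfand) so that simultaneously $\|\Id^f+\de tD^f\|<1$ and $\|\Id^s+\De tD^s\|<1$, I would conclude $\mu_n^f\to 0$, finishing the proof.

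\textbf{Main obstacle.} The only non-routine step is the control of $R_n$ in the fast-mean recursion: it depends on the evolving cross-covariance and on $(\tilde\Si_n^s)^{-1}$, whose uniform boundedness must be secured before the perturbation argument starts. This is exactly where the standing hypothesis $B\succ 0$ is indispensable, since it forces $\tilde\Si_n\succ 0$ for every $n$ regardless of $\Si_n$. Once this is in hand, the argument makes transparent the design principle of the method: restricting to slow marginals removes the fast dynamics from extrapolation, so $\De t$ is constrained only by the slow spectrum, while the fast spectrum enters solely through the harmless inner-integrator condition on $\de t$.
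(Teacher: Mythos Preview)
Your argument is correct and follows the same route as the paper: derive the closed Gaussian recursion via one Euler--Maruyama step, the block-diagonal simplification $\mbf{m}^s_{n+1}=(\Id^s+\De tD^s)\mu_n^s$, and Proposition~\ref{pro:match_smean_gauss}; then observe that the covariance recursion is exactly the inner Euler--Maruyama one, while the mean recursion is block lower-triangular with fixed diagonal blocks $\Id^s+\De tD^s$ and $\Id^f+\de tD^f$. The paper is terser at the last step: it simply notes that $\spm{\Id+D_n}$ is independent of $n$ and declares the recurrence stable, which for a time-varying product is not by itself a complete argument. Your discrete variation-of-constants with a Gelfand norm makes this rigorous, and you also supply two details the paper leaves implicit---that $\spr{\Id^s+\de tD^s}<1$ is inherited from $\spr{\Id^s+\De tD^s}<1$ because $\de t\le\De t$ (needed for $\spr{\Id+\de tD}<1$ in the variance part), and that $B\succ0$ keeps $(\tilde\Si_n^s)^{-1}$, hence $R_n$, well defined and bounded throughout.
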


\begin{remark}\label{re:mM_gauss_smean}
\begin{enumerate}[(a)]
\item
Since the constraints on the time steps correspond to different modes, Theorem~\ref{th:mM_gauss_smean} demonstrates, in a~simple test case, that the micro-macro acceleration method breaks the stability barrier for the stiff system~\eqref{eq:test_bdiag}. More precisely, if we put $\rh^s=\spr{D^s}$ and $\rh^f=\spr{D^f}$, the bounds on the spectral radii become $\De t\lesssim 1/\rh^s$ and $\de t\lesssim 1/\rh^f$. By Assumption~\ref{as:test_mscale} $\rh^s\ll\rh^f$, and thus the stiff bound acts on the micro step $\de t$ only, whereas the extrapolation step $\De t$ experiences a~much milder restriction.

\item
Moreover, the stability bound for the extrapolation scheme coincides with the stability threshold of the macroscopic closure for the slow mean. Indeed, the closed ODE for the slow mean of~\eqref{eq:test_bdiag} reads
\begin{equation*}
\dot{m}^s(t) = D^sm^s(t),
\end{equation*}
and the stability bound for the Euler scheme applied to this equation is $\De t\lesssim 1/\rh^s$.

\item
Note also that the bias in variance, always present when discretizing in time, depends only on $\de t$. Therefore, by extrapolating the (slow) marginal mean over $\De t$, we do not introduce additional bias on top of the one due to the Euler-Maruyama scheme. We observe this property numerically in Section~\ref{se:mM_gauss_smean_num} and compare with the extrapolation of (slow) marginal mean and variance in Section~\ref{se:mM_gauss_smeanvar}.
\end{enumerate}
\end{remark}

\begin{proof}
Let us fix $n\geq 0$ and let $\nd_{\mu_n,\Si_n}$ be the law at time $t_n$. We apply only one time step of the inner Euler-Maruyama scheme~\eqref{eq:em_test}, thus the mean and variance of the~prior are
\begin{equation*}
\mu_{n,1} = (\Id+\de tD)\mu_{n},\quad
\Si_{n,1} = (\Id+\de tD)\Si_n\tp{(\Id+\de tD)} + \de tB.
\end{equation*}
Moreover, the extrapolation formula~\eqref{eq:lin_extrap} becomes
\begin{equation}\label{eq:extr_smean}
\mu_{n+1}^s
= \mu^s_{n,1} +\frac{\De t - \de t}{\de t}\big(\mu^s_{n,1}-\mu^s_n\big)
= (\Id^s + \De tD^s)\mu^s_n,
\end{equation}
since we restrict with the (slow) marginal mean only. %
According to Proposition~\ref{pro:match_smean_gauss}, matching the extrapolated (slow) marginal mean $\mu^s_{n+1}$ with the prior $\nd_{\mu_{n,1},\Si_{n,1}}$ produces the normal distribution $\nd_{\mu_{n+1},\Si_{n+1}}$ with covariance matrix $\Si_{n+1}=\Si_{n,1}$ and vector mean
\begin{gather}\label{eq:extr_mean}
\begin{aligned}
\mu_{n+1}
&= \begin{bmatrix}
(\Id^s + \De tD^s)\mu^s_n\\[0.5em]
(\Id^f+\de tD^f)\mu^f_n + (\De t-\de t)\tp{C_{n,1}}(\Si^s_{n,1})^{-1}D^s\mu^s_n
\end{bmatrix}\\[1em]
&= \Bigg(\Id + \begin{bmatrix}
\De tD^s & 0\\[0.5em]
(\De t-\de t)\tp{C_{n,1}}(\Si^s_{n,1})^{-1}D^s & \de tD^f
\end{bmatrix}\Bigg)\mu_n \doteq (\Id + D_n)\mu_n.
\end{aligned}
\end{gather}

As a~result, as long as the microscopic method remains stable, the covariance matrix $\Si_n$ of the method stays bounded and converges to $V_\infty^{\de t}$ when $n$ grows to infinity. For the Euler-Maruyama scheme, this is guaranteed by the inequality $\spr{\Id+\de tD}<1$. Thus, the bias in the asymptotic variance of the micro-macro acceleration method depends only on the microscopic time step $\de t$.%

For the mean, note first that $\spm{D_n} = \spm{\De tD^s}\cup\spm{\de tD^f}$, because $D_n$ is block lower-triangular.
Since the spectrum of the matrix $\Id+D_n$ does not depend on the current step,  the recurrence~\eqref{eq:extr_mean} is stable (that is $\lim_{n\to\infty}\mu_n=0$) when
\begin{equation*}
\max\big\{\rh(\Id^s+\De tD^s),\ \rh(\Id^f+\de tD^f)\big\}<1.
\end{equation*}
This shows that the asymptotic stability of the mean prevails, regardless of the spectral gap, for $\De t$ and $\de t$ satisfying assumption~\eqref{ass:spectral_gap_discrete}.
\end{proof}

\subsubsection{Numerical illustrations}\label{se:mM_gauss_smean_num}
We complement and extend Section~\ref{se:mM_gauss_smean_ana} with numerical experiments in which the extrapolation time step $\De t$ crosses its stability bound. First, we show that the presence of matching failures, see Remark~\ref{re:match_fail}, indicates, even before blow-up, that the extrapolation time step lies beyond the stability threshold. Second, using matching failures as stability criterion, we explore the $(\de t,\De t)$-parameter space against the stability of the micro-macro acceleration method. We do this for two linear systems having the same drift spectrum: one with diagonal drift matrix, which falls within the scope of Theorem~\ref{th:mM_gauss_smean}, and the other one with slow-fast structure, which displays more intricate behavior. Third, we discuss an adaptive extrapolation time-stepping strategy based on matching failures that allows to perform the micro-macro accelerations when the exact stability threshold remains unknown.

As our first model, we consider the linear system with diagonal drift matrix
\begin{equation}\label{eq:diag_num}
D = \begin{bmatrix}
-1 & 0\\
0 & -10
\end{bmatrix},\quad
\otilde{B} = \frac{1}{9}\begin{bmatrix}
9 & \sqrt{10}\\
\sqrt{10} & 20/9
\end{bmatrix},
\end{equation}
which arises by drift-dizaonalization, as described in Section~\ref{se:block_diag}, of our second model, a linear slow-fast system~\eqref{eq:test_sf} with matrices
\begin{equation}\label{eq:sf_num}
A = \begin{bmatrix}
-1 & 1\\
0 & -10
\end{bmatrix},\quad
B = \begin{bmatrix}
1 & 0\\
0 & 10
\end{bmatrix}.
\end{equation}
The coefficients in~\eqref{eq:sf_num} correspond to the slow-fast  equation~\eqref{eq:test_sf} with parameters $a_{11}=-1$, $a_{12}=1$, $a_{22}=-1$ and $\vep=1/10$, see also Example~\eqref{ex:test_sf}. The systems~\eqref{eq:sf_num} and~\eqref{eq:diag_num} only differ by the coordinate transform that ensures the block-dizaonalization. In both systems, the spectrum of drift equals $\{\ka^s=-1, \ka^f=-10\}$ and thus the stability bound on the micro time step $\de t$ is $2/|\ka^f|=0.2$.

In the numerical experiments, we perform the micro-macro acceleration with extrapolation of the slow mean, as described in Section~\ref{se:mM_accel_alg_linSDE}. We always use one micro time step $\de t$ for the inner Euler-Maruyama integrator, thus setting $K=1$. For the Monte Carlo simulation, described in~Appendix~\ref{se:MC_sim}, we fix the number of replicas to $J=5\cdot 10^4$ and initialise with i.i.d.~samples from the  normal distribution.

\begin{remark}[On matching failure]\label{re:match_fail}
In the numerical implementation of \textit{Stage 4}, we cannot guarantee that the Newton-Raphson iteration to find Lagrange multipliers (by solving~\eqref{eq:lagr_MC} in the Monte Carlo discretization) reaches the desired tolerance, especially when, for efficiency, we limit the maximum number of iterations in the matching stage. Whenever the Newton-Raphson iteration does not converge within the prescribed number of iterations, we call this situation \emph{matching failure}. The natural strategy to deal with matching failures during the Monte Carlo simulation of the micro-macro acceleration method is to use adaptive extrapolation time-stepping. Later in this Section, we discuss the application of adaptive extrapolation time-stepping to the study of stability of the micro-macro acceleration method.
\end{remark}

\textbf{Detecting instability with matching failures.}
In a first experiment, we look at the slow marginal of drift-diagonal system~\eqref{eq:diag_num} at final time $T=210$ obtained for a range of macro time steps $\De t$ that goes beyond the stability bound $\De t^* = 2$, which results from Theorem~\ref{th:mM_gauss_smean}. To asses the accuracy, or lack thereof, we compare these slow marginals to the slow marginal of the invariant distribution of the micro-macro acceleration method for system~\eqref{eq:diag_num}. The full invariant distribution is Gaussian with mean $0$ and variance $V_\infty^{\de t}$ (which we approximate computing a partial sum of~\eqref{eq:test_equivar}), corresponding to the inner Euler-Maruyama scheme with time step $\de t$, see Section~\ref{se:mM_invdistr}. Thus, the slow marginal of the invariant distribution is normally distributed with zero mean and variance $V_\infty^{s,\de t}$ given by the top left entry of $V_\infty^{\de t}$.

Since we use a fixed extrapolation step $\De t - \de t$, with the micro time step $\de t$ set to $0.09$, we cannot guarantee, especially for $\De t$ beyond stability bound, that the Newton-Raphson procedure for the matching converges with a given tolerance within a fixed number of iterations, see Remark~\ref{re:match_fail}. In this experiment, we set the maximum number of iterations to $50$, to give the Newton-Raphson procedure enough time to converge. When we still do not reach the given tolerance, we set the weights using the last available Lagrange multipliers. Hence, throughout this experiment we proceed with the simulation even if the weights obtained after matching are incorrect. We call this situation \emph{matching failure} and record the number of times it happened during the simulation. For macroscopic time steps below $\De t = 2$ we never encountered matching failures.

In Figure~\ref{fig:mMnoad_mean_nostab}, we plot histogram densities resulting from the micro-macro acceleration method and compare them with the contour of the invariant distribution. Below $\De t=2$ the histogram density aligns well with this contour; we also do not record any matching failures. After crossing the threshold $\De t=2$, we encounter matching failures and the resulting histogram density fails to correspond to the contour any more.

\begin{figure}
\centering
\includegraphics[width=0.7\textwidth]{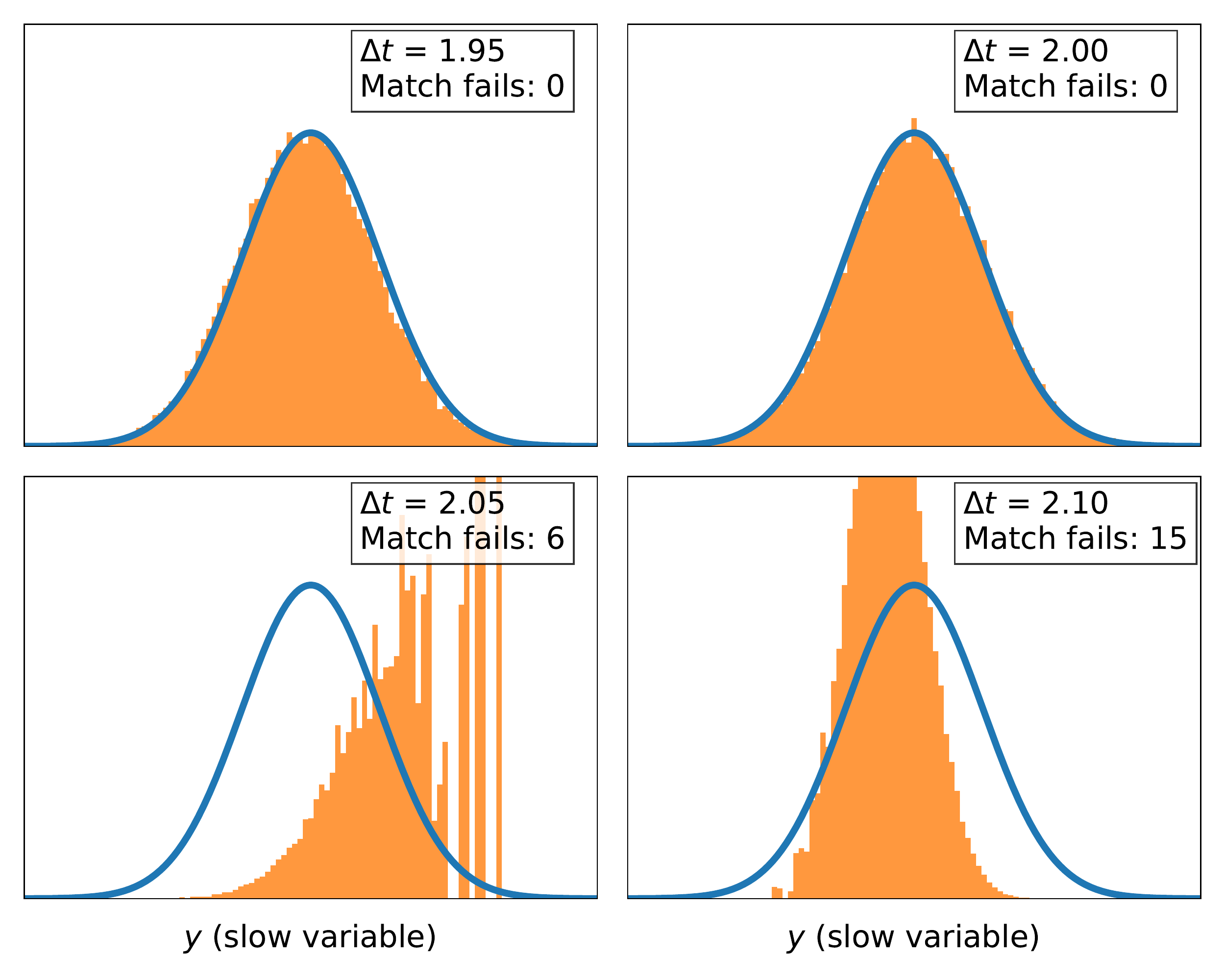}
\caption{The slow distribution (orange histograms) at time $T=210$ of the micro-macro acceleration method for~\eqref{eq:diag_num} with different extrapolation steps $\De t$ compared with the stationary Gaussian density $\nd_{0,V_\infty^{s,\de t}}$ (blue curve) of the inner Euler-Maruyama scheme based on micro step $\de t=0.09$. Below the stability threshold (equal to $2$) the slow distribution agrees well with stationary density related solely to the Euler-Maruyama scheme. The accuracy of simulation breaks when $\De t$ crosses $2$, which is also manifested by the appearance of matching failures.}
\label{fig:mMnoad_mean_nostab}
\end{figure}

\textbf{Comparing stability bounds for drift-diagonal and slow-fast systems.}
In a second experiment, using matching failures as benchmark, we compare the stability of the two systems~\eqref{eq:sf_num} and~\eqref{eq:diag_num} as a function of combined micro $\de t$ and macro $\De t$ time steps. The effect of the coordinate transform that gives~\eqref{eq:diag_num} its block-diagonal structure is that the extrapolated slow variable in both systems is different.

We perform micro-macro simulations as in the previous test for a number of grid points $(\de t, \De t)$ in the rectangle $[0, 0.2]\times[0.2, 2.3]$. For the micro time step $\de t$, we choose a range of values below the stability threshold $\de t=0.2$, so we always keep the inner integrator stable. For the range of macro time steps $\De t$, the smallest chosen value corresponds to $\De t = \de t$, i.e., no extrapolation, and the largest chosen value goes beyond the stability threshold $\De t^*=2$ from Theorem~\ref{th:mM_gauss_smean}. We classify every grid point either as unstable (orange stars), as soon as at least one matching failure occurred during the simulation, or stable (blue dots), when no failures took place up to the final time $T=210$. We present the results in Figure~\ref{fig:mM_stab_dt_vs_Dt}.

\begin{figure}
\centering
\includegraphics[width=0.35\textwidth]{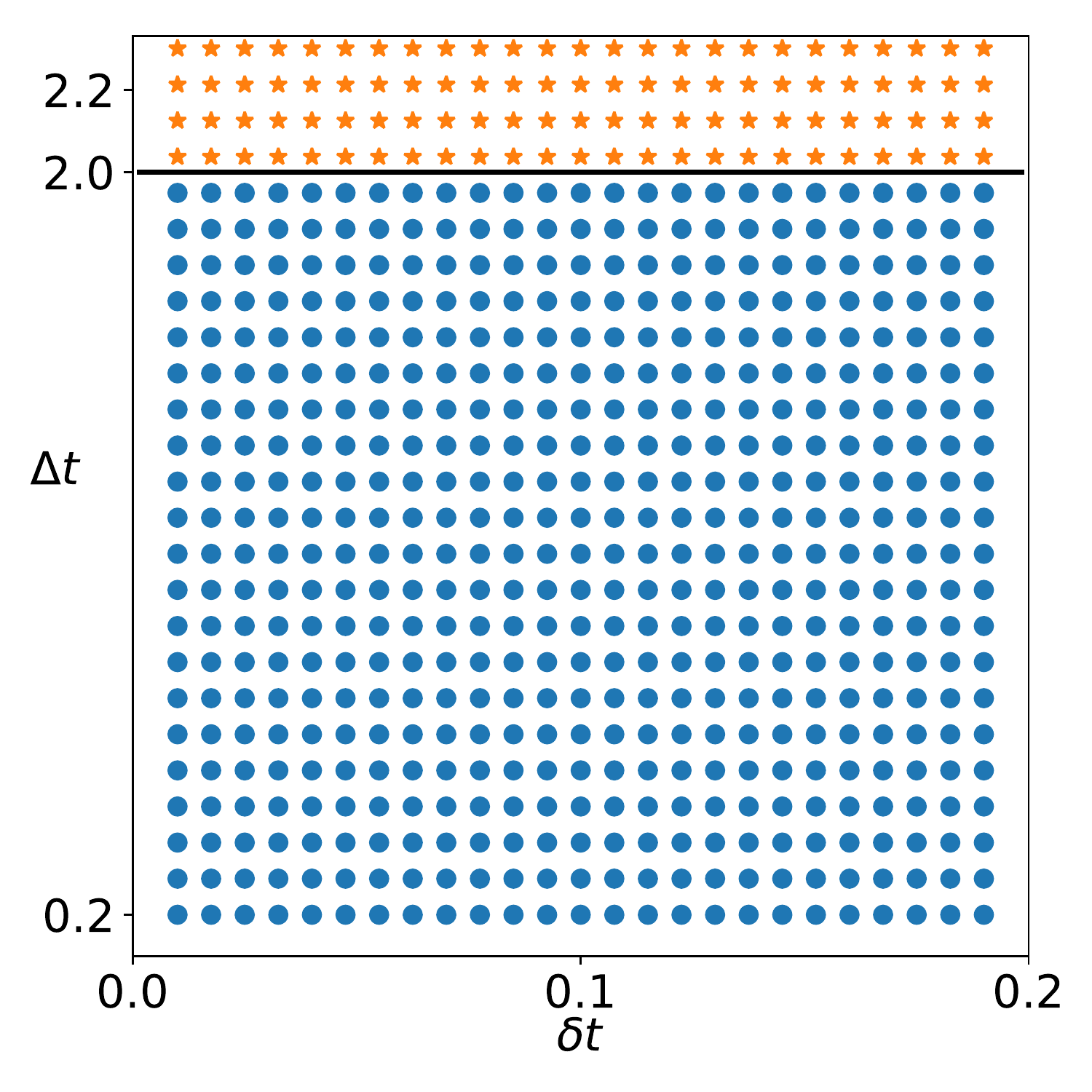}
\includegraphics[width=0.35\textwidth]{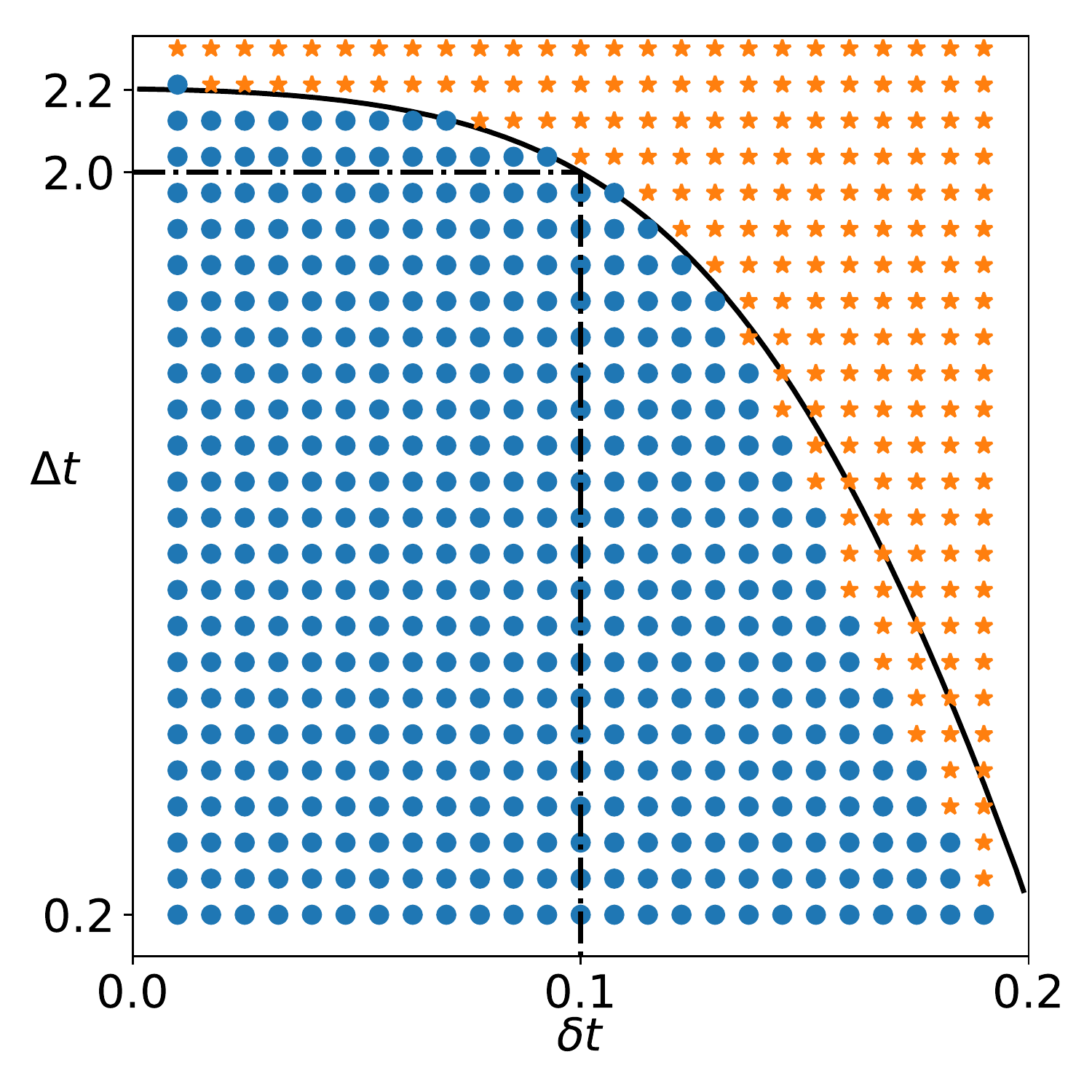}
\caption{The $(\de t,\De t)$-parameter space classified according to the stability (blue dots) or instability (orange stars) of the micro-macro acceleration method for two linear systems with the same drift spectrum. For drift-diagonal system~\eqref{eq:diag_num} (left figure), the result agrees with the conclusion of Theorem~\ref{th:mM_gauss_smean}: the macroscopic stability threshold (solid black line) stays independent of the micro time step $\de t$ and equals $2$. In the case of slow-fast system~\eqref{eq:sf_num} (right figure), the stability of extrapolation decreases as $\de t$ approaches the microscopic stability threshold $0.2$. Around $\de t=0.1$, where the macroscopic stability threshold plunges below $2$, we can approximate the borderline with the solutions of $\spr{I+A(\de t,\De t)}=1$ (solid black line), see text.}
\label{fig:mM_stab_dt_vs_Dt}
\end{figure}

First, we verify once more that detecting matching failures provides a good criterion to test the stability of micro-macro simulation. The classification of grid points in the left figure, which indicates stability of drift-diagonal system~\eqref{eq:diag_num}, aligns with the conclusion of Theorem~\ref{th:mM_gauss_smean}. The splitting of the slow and fast variables in the drift matrix corresponds to the splitting of stability thresholds for the micro and macro time steps. Second, the micro and macro stability bounds cease to be independent for the slow-fast system~\eqref{eq:sf_num} without block-diagonal structure. We can see in the right figure that simulations become unstable for smaller extrapolation steps when the micro time step $\de t$ tends to its stability threshold.

The dependence of the macroscopic stability threshold on $\de t$ results from the influence of fast modes on the extrapolated slow mean. To see this, note that when using matrix $A$ from~\eqref{eq:sf_num} instead of matrix $D$ from~\eqref{eq:diag_num} the matrix $D_n$ in~\eqref{eq:extr_mean} becomes
\begin{equation*}
A_n(\de t,\De t) = \begin{bmatrix}
-\De t & \De t -\de t\\[0.5em]
-(\De t-\de t)\tp{C_{n,1}}(\Si^s_{n,1})^{-1} & -10\de t
\end{bmatrix}.
\end{equation*}
Therefore, the matrix ceases to be lower-triangular, and its spectrum depends on the values of the covariance $C_n$ and slow-variance $\Si^s_n$ at each step of the micro-macro acceleration procedure (through the left bottom entry of $A_n$).

It is difficult to analyse the stability of $I+A_n$, because we cannot evaluate the joint spectral radius for this family of matrices~\cite{Jungers2009}. However, we can gain some insight by assuming that the variance matrix $\Si_n$ is close to the equilibrium value $V^{\de t}_\infty$. Defining $R(\de t) = \tp{(C^{\de t}_{\infty})}(V^{s,\de t}_{\infty})^{-1}$, the regression coefficient of $V^{\de t}_{\infty}$, we can focus on the matrix
\begin{equation*}
A(\de t, \De t) = \begin{bmatrix}
-\De t & \De t -\de t\\[0.5em]
-(\De t-\de t)R(\de t) & -10\de t
\end{bmatrix},
\end{equation*}
which is independent on the current step. Plotting the values of $(\de t,\De t)$ where the spectral radius of $I+A(\de t,\De t)$ equals $1$ we obtain the solid black lines in Figure~\ref{fig:mM_stab_dt_vs_Dt}. We can see that on the right figure this line agrees well with the results of simulation for small and moderate values of $\de t$; for micro time steps close to the stability threshold of the Euler-Maruyama scheme, the line deviates from the results of simulation. We can attribute this inconsistency to the fact that for large $\de t$ the values of $\Si_n$ oscillate more around the equilibrium, invalidating our assumption. Nevertheless, the line remains accurate around the values of $\de t=0.1$ where it plunges below $2$ -- the macroscopic stability threshold of the slow part only.

We can characterise the point $\de t = 0.1$ on the right figure as the value of micro time step $\de t$ for which $C^{\de t}_\infty=0$. To see this, note that when $R(\de t)=0$ the matrix $A(\de t, \De t)$ becomes upper-triangular and its spectral radius stays below $1$ for all $\De t$ smaller than $2$. The regression coefficient vanishes only when $C^{\de t}_\infty=0$. To conclude, the analysis of system~\eqref{eq:sf_num} suggests that the macro stability threshold for the micro-macro acceleration method stays independent on the time-scale separation present in the slow-fast system when we keep the micro time step $\de t$ below the value for which $C^{\de t}_\infty=0$.

\textbf{Convergence of the distribution of fast variables}
In a third experiment, we look at both slow and fast marginals of the slow-fast linear system~\eqref{eq:sf_num} obtained by the micro-macro acceleration method at the final time $T=210$ for a range of macro time steps $\De t$. We compare these marginals to the corresponding marginals of the invariant distribution of the method: Gaussian with mean $0$ and variance $V^{\de t}_{\infty}$.  The details of this experiment are similar to those in detecting instabilities with matching failures and Figure~\ref{fig:mMnoad_mean_nostab}.

In the two upper plots of Figure~\ref{fig:mMnoad_smean_faststab_linSF10}, we see that the computation was stable up to the final time -- no matching failures were recorded -- and, as previously, the slow distribution (orange histogram) agrees with the slow marginal of invariant measure (solid blue line). Additionally, the distribution of fast variables also complies with its invariant counterpart. The accuracy of the fast variables over the whole simulation time domain is not a guaranteed feature of the micro-macro acceleration method from Section~\ref{se:mM_accel_alg_linSDE} -- whose focus lies on the slow macroscopic observables. However, for a large final time horizon, Figure~\ref{fig:mMnoad_smean_faststab_linSF10} shows that the distribution of fast variables will also converge towards the correct invariant.

In the two bottom plots of Figure~\ref{fig:mMnoad_smean_faststab_linSF10}, we can see that instabilities develop, corroborated by the appearance of matching failures, and the distributions of both slow and fast variables diverge from the corresponding invariant marginals. The stability threshold is located between $\De t = 1.9$ and $\De t = 1.95$. This agrees with the findings for the linear slow-fast system~\eqref{eq:sf_num} presented in the right plot of Figure~\ref{fig:mM_stab_dt_vs_Dt}, which shows decrease in stability as the micro time step increases (here we take $\de t = 0.11$).

\begin{figure}
\centering
\includegraphics[width=\textwidth]{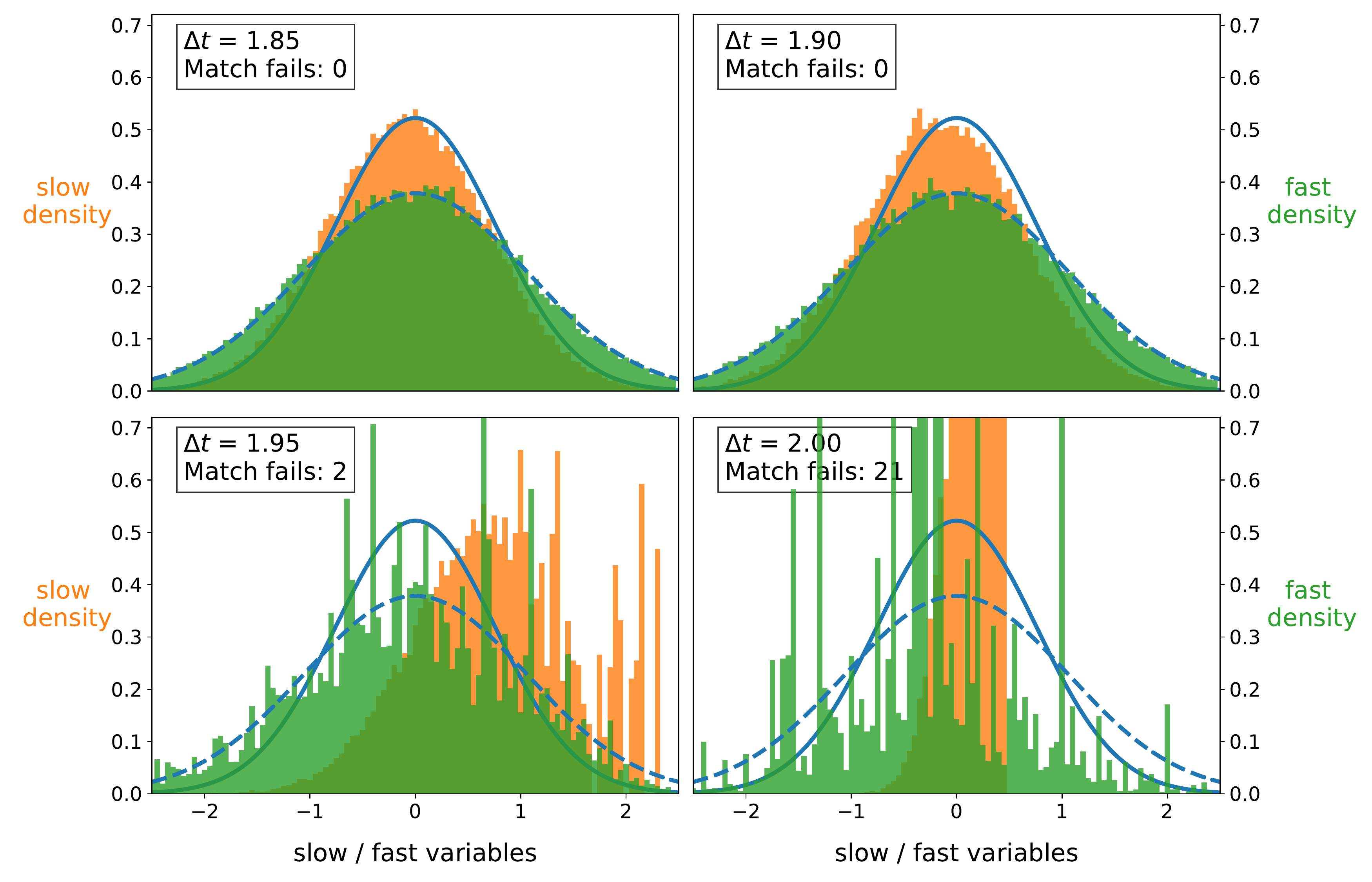}
\caption{The slow (orange) and fast (green) distribution at time $T=210$ of the micro-macro acceleration method for slow-fast system (5.11) with different extrapolation steps $\De t$ compared with the slow (solid blue curve) and fast (dashed blue curve) stationary Gaussian density $N_{0, V_{\de t}}$ of the underlying Euler-Maruyama micro scheme with time step $\de t = 0.11$. In the two upper plots, the method is stable and we can see that it reproduces correctly the fast invariant density as well. In the bottom plots, the stability of the method brakes and it fails to converge to the invariant distribution for both slow and fast variables. Compared with Figure~\ref{fig:mMnoad_mean_nostab}, we can notice that, due to larger $\de t$, the stability threshold decreased (and is somewhere between $1.9$ and $1.95$). This is compatible with the observed stability loss for the slow-fast systems as the micro time step increases, as illustrated in the right plot of Figure~\ref{fig:mM_stab_dt_vs_Dt}.}
\label{fig:mMnoad_smean_faststab_linSF10}
\end{figure}

\textbf{Adding adaptive time-stepping.}
In the final experiment with the slow mean extrapolation, we investigate the influence of crossing the stability bound on an adaptive strategy for the extrapolation time step, see also~\cite{DebSamZie2017}. The adaptive strategy reduces the extrapolation time step by a half when the Newton-Raphson procedure for the matching does not reach the desired tolerance within $10$ iterations. Whenever it converges, we enlarge the proposed extrapolation length, by a factor $1.2$, for the next step of micro-macro algorithm. The actual $\De t$ is always kept within the interval $[\de t, \De t_{\mrm{max}}]$. For the simulation we use the same parameters as in the first experiment and we apply the adaptive micro-macro acceleration method to slow-fast system~\eqref{eq:sf_num}.

\begin{figure}
\centering
\includegraphics[width=0.8\textwidth]{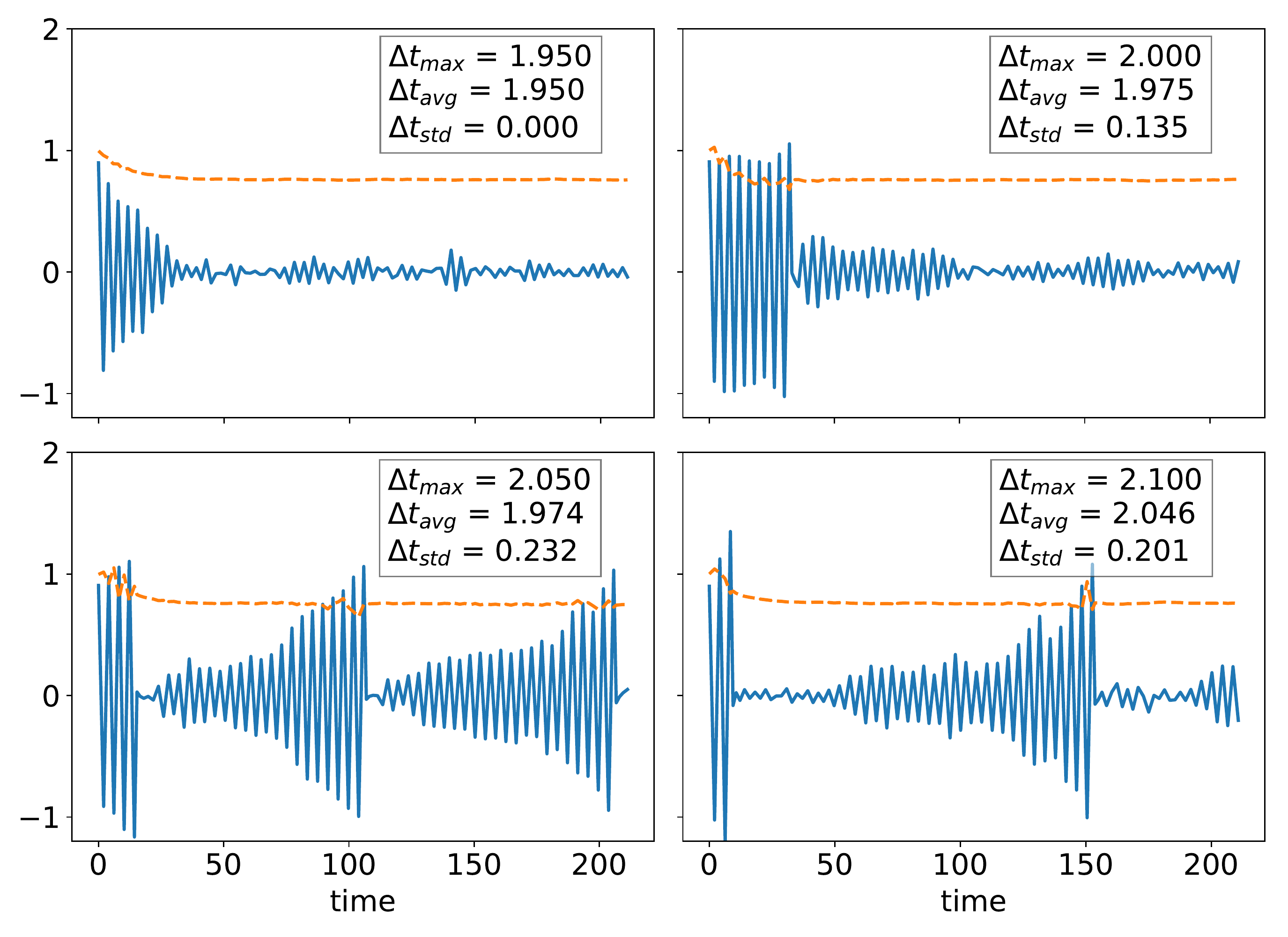}
\caption{The evolution of slow mean (solid blue) and slow standard deviation (dashed orange) of the micro-macro acceleration method with adaptive extrapolation strategy of the slow mean. The adaptive time-stepping activates when the maximal value of extrapolation step $\De t_{\mrm{max}}$ is set above the stability threshold (equal to $2$). We can see adaptivity by measuring the average extrapolation time step $\De t_{\mrm{avg}}$ over $[0,T]$ which becomes smaller than the maximal value $\De t_{\mrm{max}}$. Even though $\De t_{\mrm{avg}}$ can be larger than the stability threshold, the actual time steps drop occasionally below $2$, as their standard deviation $\De t_{\mrm{std}}$ over $[0,T]$ indicates.}
\label{fig:mMadap_smean_nostab}
\end{figure}

In Figure~\ref{fig:mMadap_smean_nostab}, we plot the resulting evolution of the slow mean and the slow standard deviation together with basic statistics of the time steps produced during the simulation over $[0,T]$. We see that the adaptive strategy reduces $\De t$ when $\De t_{\mrm{max}}$ crosses $2$, but not when it stays below $2$: activation of adaptive time-stepping indicates the crossing of the stability threshold. Moreover, the further $\De t_{\mrm{max}}$ exceeds the stability threshold, the more often the adaptive strategy is invoked. Therefore, we can safely assume that this adaptive time-stepping procedure works in situations where the stability threshold remains unknown (or changes as a function of time). We can also see that on average $\De t$ stays above the stability bound sometimes giving a good approximation to the invariant density of the EM scheme. The values of the standard deviation indicate that $\De t$ drops occasionally below~$2$.

\subsection{Stability bounds for the extrapolation of marginal mean and variance}\label{se:mM_gauss_smeanvar}
In this Section, we analyze the method in the case when besides the slow mean we also extrapolate the slow variance. In general, the addition of the second moment makes the simulation more accurate during the transient regime, see~\cite{Vandecasteele2019}. Here, we concentrate only on finding the stability bounds, and we demonstrate the they are approximately equal to the stability bound of the closure ODE for slow mean and variance only.

\textbf{Kronecker's symbols.} To study stability of the micro-macro acceleration method when extrapolating both mean and variance, we derive recurrence relations for the variance that are based on Kronecker products and sums \cite{HorJoh1991}. For two matrices $M,M'\in\mat{d}{d}$, we define their \emph{Kronecker product}, denoted $M\kdot M'$, as a linear operator on $\mat{d}{d}$ such that
\begin{equation*}
(M\kdot M')\ldot A = M'A\tp{M},\quad A\in\mat{d}{d},
\end{equation*}
where the lower dot `$\ldot$' indicates the application of an operator on a matrix, to visually distinguish it from the matrix multiplication. In what follows, we also employ the \emph{Kronecker sum} defined as $M\ksum M' = M\kdot I + I\kdot M'$. The spectrum of the Kronecker product and sum operators is the spectrum of the underlying matrices in $\mat{d^2}{d^2}$. Moreover, if $j\geq 0$ is an integer, $(M\kdot M')^j$ stands for the $j$-fold composition of $M\kdot M'$, and similarly for the Kronecker sum or any combination of both.

\subsubsection{Asymptotics of (slow) marginal mean and variance}
To study the stability of the micro-macro acceleration method specified in Section~\ref{se:mM_accel_alg_linSDE} when extrapolating mean and variance, thus we take here the restriction given by~\eqref{eq:res_smeanvar},
we again consider test equation~\eqref{eq:test_bdiag} with block diagonal drift matrix $D=\diag(D^s,D^f)$.
To conduct the same analysis as in Section~\ref{se:mM_gauss_smean_ana}, we should derive the recursion relations for the fast mean, fast variance, and covariance matrix as well. These relations incorporate the influence of matching on fast moments and are based on an appropriate modification of Proposition~\ref{pro:match_smean_gauss}.\footnote{Stating that the matching of a prior $\nd_{\mu,\Si}$ with a slow mean $\oline{\mu}^s$ and a slow variance $\oline{\Si}^s$ results in the normal distribution with the fast mean $\oline{\mu}^f=\mu^f+\tp{C}(\Si^s)^{-1}(\oline{\mu}^s-\mu^s)$ and fast variance $\oline{\Si}^f=\Si^f - \tp{C}(\Si^s)^{-1}(C-\oline{C})$ where $\oline{C}=\tp{C}(\Si^s)^{-1}\oline{\Si}^s$.} We do not present this derivation due to the complexity of the resulting formulas. Instead, we find it more insightful to focus on the slow moments only and supplement the analysis with the numerical illustration.

From~\eqref{eq:extr_smean}, we already know that the forward Euler extrapolation of the mean results in
\begin{gather*}
\begin{aligned}
\mu_{n+1}^s
&= (\Id^s + \De tD^s)\mu^s_n.
\end{aligned}
\end{gather*}
Therefore, when $\spm{\De t D^s}\subset\dsk(-1,1)$, $\mu^s_n$ goes to zero as $n$ increases, which corresponds to the behaviour of the slow mean in Section~\ref{se:mM_gauss_smean}.

After applying one step of the EM scheme, the (slow) marginal variance reads
\begin{gather*}
\begin{aligned}
\Si_{n,1} & = (\Id^s + \de t D^s)\Si^s_{n}\tp{(\Id^s + \de t D^s)} + \de tB^s.
\end{aligned}
\end{gather*}
Forward Euler extrapolation gives
\begin{gather*}
\begin{aligned}
\Si^s_{n+1}
&= \Si^s_{n} + \frac{\De t}{\de t}\big(\Si^s_{n,1}-\Si^s_n\big)= \Si^s_n + \De t\big(D^s\Si^s_n + \Si^s_n\tp{(D^s)} + \de t D^s\Si^s_n\tp{(D^s)}\big) + \De tB^s\\[0.5em]
&= \big[\Id^{s\kdot s} + \De t\big(D^s\ksum D^s + \de t D^s\kdot D^s\big)\big]\!\ldot\Si^s_n + \De tB^s.
\end{aligned}
\end{gather*}
The preceding recursion shows that the asymptotic behaviour (as $n$ increases) of $\Si^s_n$ depends on the spectrum of $L^{s\kdot s}_{\de t} \doteq D^s\ksum D^s + \de t D^s\kdot D^s$. Before investigating this spectrum, we note that, whenever $\spm{\De tL^{s\kdot s}_{\de t}}\subset\dsk(-1,1)$, the sequence of variances converges to
\begin{align*}
\De t\sum_{j=0}^{\infty}\big(\Id^{s\kdot s} +\De t L^{s\kdot s}_{\de t}\big)^{j}\!\ldot B^s = \De t\Big[\sum_{j=0}^{\infty}\big(\Id^{s\kdot s} +\De t L^{s\kdot s}_{\de t}\big)^{j}\Big]\!\ldot B^s =-\big(L^{s\kdot s}_{\de t}\big)^{-1}\!\ldot B^s.
\end{align*}
Therefore, the asymptotic (slow) marginal variance of the micro-macro acceleration method is independent of the value of the macroscopic step $\De t$ and depends only on the drift matrix $D^s$, the diffusion $B$ and the inner Euler-Maruyama time step $\de t$.

Now, we investigate the spectrum of $L^{s\kdot s}_{\de t}$. Because the micro time step $\de t$ must damp all the fast modes, Assumption~\ref{as:test_mscale} ensures that $\de t\ll\spr{D^s}$. Hence, we look at $L^{s\kdot s}_{\de t}$ as a perturbation of $D^s\ksum D^s$ by the matrix $\de tD^s\kdot D^s$. The formulas for the spectrum of Kronecker's product and sum are given by~\cite[pp.~268, 245]{HorJoh1991}
\begin{equation*}
\spm{D^s\ksum D^s} = \{\ka' + \ka'':\ \ka',\ka''\in\spm{D^s}\},\quad
\spm{D^s\kdot D^s} = \{\ka'\ka'':\ \ka',\ka''\in\spm{D^s}\}.
\end{equation*}

Disregarding the perturbation by $D^s\kdot D^s$ for a moment, we see that the asymptotic stability of the (slow) marginal variance is related to requiring $\spm{\De tD^s\ksum D^s}\subset\dsk(-1,1)$, which, by the form of the spectrum of Kronecker's sum, reads $\spm{\De tD^s}\subset\dsk(-1/2,1/2)$. Note that $\dsk(-1/2,1/2)$ equals the Euler stability region of the linear system of ODEs
\begin{equation}\label{eq:ODEclos}
\dot{m}^s(t) = D^sm^s(t),\quad
\dot{V}^s(t) = D^sV^s(t) + V^s(t)\tp{(D^s)} + B^s,
\end{equation}
for the evolution of slow mean and variance. Thus, the stability bound on the leading part~of $L^{s\kdot s}_{\de t}$ coincides with the bound of the Euler method for the moment closure with time step~$\De t$.

To estimate the influence of the perturbation, we employ the Bauer--Fike theorem~\cite[Thm.~6.3.2]{HorJoh2013} which, for any $\ka\in\spm{L^{s\kdot s}_{\de t}}$, gives an estimate
\begin{equation*}
\min_{\ka'\in\spm{D^s\ksum D^s}}|\ka-\ka'|\leq\de t\con[2]{S}\spr{D^s}^2,
\end{equation*}
where $\con[2]{S}$ is the condition number, with respect to the spectral norm, of the similarity matrix $S$ between $D^s\ksum D^s$ and its diagonal form. The estimate worsens for a very large condition number $\con[2]{S}$, which can be the case when eigenvectors of $D^s\ksum D^s$ are nearly linearly dependent. Assuming that this is not the case, the Bauer--Fike theorem ensures that all eigenvalues of $L^{s\kdot s}_{\de t}$ are contained within disks centred around eigenvalues of $D^s\ksum D^s$ and having radius $C\de t\ll\De t$. Therefore, to guarantee the stability of the extrapolated (slow) marginal variance, it suffices to bring all these disks inside $\dsk(-1/2,1/2)$. This is achieved whenever $\De t$ is such that $\spm{\De tD^s}\subset\dsk(-1/2,1/2-C\de t)$. According to this property, we can refer to the deflated disk $\dsk(-1/2,1/2-C\de t)$ as a \emph{sufficient stability region} of the micro-macro acceleration algorithm with extrapolation of (slow) marginal mean and variance.

\begin{figure}
\centering
\includegraphics[width=0.4\textwidth]{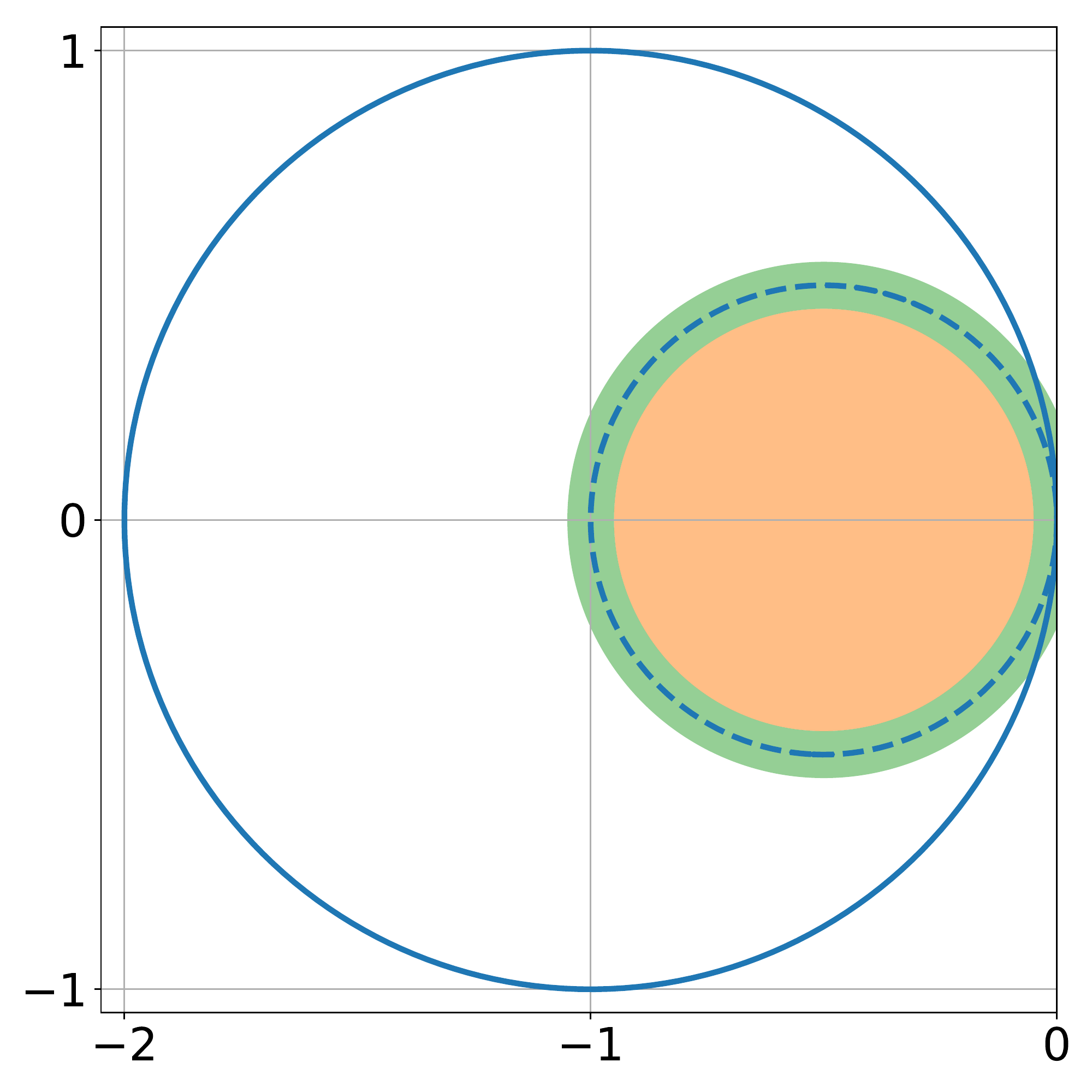}
\caption{
The stability regions for: the extrapolation of slow mean only (interior of solid blue circle), the extrapolation of slow mean and variance (union of orange disk and green ring), the ODE~\eqref{eq:ODEclos} of slow mean and variance evolution (interior of dashed blue circle). The stability region for the extrapolation of the slow mean overlaps with the corresponding region for the mean of ODE, whereas the stability region for the extrapolation of the slow mean and variance is a perturbation (on the order of micro time step $\de t$) of the corresponding region for the mean and variance of the ODE.
}
\label{fig:stability_region}
\end{figure}

In practice, as the value of $C\de t$ is small compared to $\De t$, one should choose a sufficient stability region as a target for the slow eigenvalues. Nevertheless, in particular cases (such as in Section~\ref{se:mM_gauss_smenvar_num}), the location of the eigenvalues of $L^{s\kdot s}_{\de t}$ within the disks can be more advantageous. In the most extreme situation, it may suffice to bring the centers of disks within a $C\de t$-neighbourhood of $\dsk(-1/2,1/2)$. This means that the stability of micro-macro acceleration method actually improves on the stability of deterministic closed ODEs for mean and variance, and we can refer to the inflated disk $\dsk(-1/2,1/2+C\de t)$ as \emph{necessary stability region}.

We summarize the relations between the different stability regions in Figure~\ref{fig:stability_region}. The necessary stability region for extrapolation of the slow variance (union of orange disk and green ring) lies mostly in the stability region for the extrapolation of the slow mean (interior of solid circle) but can exceed the stability region for variance closure (interior of dashed circle). The innermost orange disk represents the region where the stability is guaranteed by the Bauer--Fike theorem, whilst for slow modes in the green ring the stability hinges on the particular perturbation.

\subsubsection{Numerical illustrations}\label{se:mM_gauss_smenvar_num}
Let us now revisit system~\eqref{eq:test_sf}. In this case, we have just one slow mode and
\begin{equation*}
L^{s\kdot s}_{\de t} = 2a_{11} + \de ta^2_{11}.
\end{equation*}
The value of $a_{11}$ is negative, to comply with Assumption~\ref{as:test_mscale}, and we can read explicitly the  structure of perturbation in $L^{s\kdot s}_{\de t}$. The Kronecker's sum equals to $2a_{11}$ and the product is $a^2_{11}$. Since $\de ta^2_{11}>0$, the slow mode $L^{s\kdot s}_{\de t}$ will be moved to the right compared to the deterministic mode $2a_{11}$. Therefore, the stability bound for the extrapolation time step $\De t$ increases and reads
\begin{equation*}
\De t < \frac{2}{2a_{11} + \de ta^2_{11}}.
\end{equation*}
Moreover, within this bound, the asymptotic (slow) variance equals $-1/(2a_{11}+\de ta_{11}^2)$.

\textbf{Testing stability bounds for slow-fast system.}
For the first numerical illustration, we take the drift and diffusion matrices as in~\eqref{eq:sf_num}. The stability threshold for the extrapolation of slow mean and variance equals $2/1.9\approx 1.05$. We test this threshold by performing the micro-macro simulation as described in Section~\ref{se:mM_gauss_smean_num}, but with extrapolation of slow mean and variance. We use the adaptive time-stepping strategy and look at the statistics of actual time step to locate the threshold.

In Figure~\ref{fig:mMadap_linSF_smeanvar_conv} we plot the distances (in the Frobenius norm) of the vector mean $\mu$ and variance matrix $\Si$ of the micro-macro acceleration method to the mean and variance of the invariant distribution of the inner Euler-Maruyama scheme, equal to $0$ and $V_\infty^{\de t}$ respectively. The results are similar to the results for the extrapolation of mean only: we see that the adaptive strategy activates only after crossing $1.05$, which indicates instability as predicted by analytical considerations. In this case, the instabilities arise only in the evolution of variance, since the time steps considered in the experiments are well below the stability threshold, equal to $2$, for the extrapolation of the slow mean. Additionally, for stable extrapolation times, the distances between the first two moments of the micro-macro acceleration method and the invariant Gaussian distribution become small quickly. Since the intermediate distributions of the method are also Gaussian, the agreement between the first two moments indicates that these distributions converge, as time grows to infinity, to the invariant normal distribution $\nd_{0,V_\infty^{\de t}}$ in Kullback-Leibler divergence, as was the case in Section~\ref{se:mM_gauss_smean_ana}.

\begin{figure}
\centering
\includegraphics[width=0.8\textwidth]{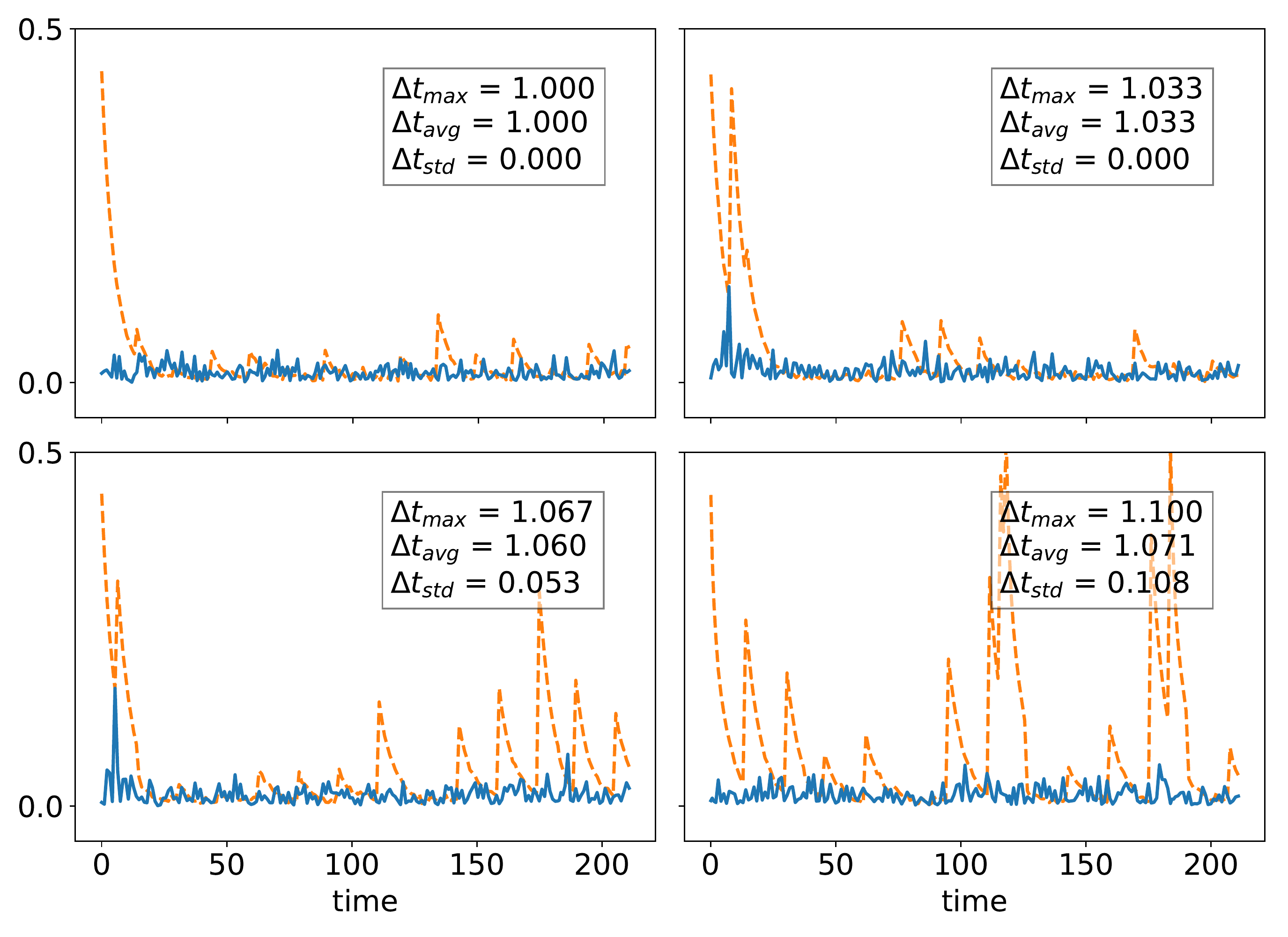}
\caption{The evolution of $\|\mu\|$ (solid blue) and $\|\Si - V_{\infty}^{\de t}\|$ (dashed orange), where $\mu$ is the vector mean and $\Si$ the variance matrix of the micro-macro acceleration method with adaptive extrapolation strategy of the slow mean and variance. The adaptive time-stepping activates after the maximal extrapolation time step $\De t_{\mrm{max}}$ crosses the theoretical value equal to $2/1.9 \approx 1.05$ (two bottom figures). The simulation is stable below this threshold, even though we crossed the deterministic stability bound equal to $1$ (two upper figures). Moreover, in the stable regime, both norms quickly become small, which indicates the convergence of full distributions to the invariant Gaussian density $\nd_{0,V_\infty^{\de t}}$.
}
\label{fig:mMadap_linSF_smeanvar_conv}
\end{figure}

\textbf{Increasing stability with additional micro steps}
In a second experiment, we examine the influence of the micro simulation with different numbers $K$ of micro time steps $\de t$, while keeping the microscopic time window $K\de t$ constant. To this end, we perform micro-macro simulation of~\eqref{eq:sf_num} with the adaptive extrapolation of the slow mean and variance and plot the deviation (in the Frobenius norm) of the evolving covariance matrix from the variance of the invariant distribution for different macroscopic time steps $\De t$. In Figure~\ref{fig:mM_stab_dt_vs_Dt}, we demonstrate that for slow-fast systems smaller time step (K=1 there) increases the extrapolation stability threshold, which is due to the increase in accuracy of the micro simulation. Here, we make micro simulation more accurate by fixing the value of the microscopic time window and subdividing it into smaller time steps.

In Figure~\ref{fig:mMadap_linSF_smeanvar_Kconv}, we plot the resulting curves for various combinations of $K$ and $\de t$ related by $K\de t = 0.12$. In the left plot, all three curves stay close to zero, implying that all simulations are stable. When we cross $\De t =0.95$, the simulation with $K=1$ micro time step becomes clearly unstable, while the two other do not develop large peaks for both $\De t =1.05$ and $\De t=1.15$. However, we observe that the simulations with $K=2,3$ perform similarly, and we do not get further increase in stability when taking even larger $K$. This could be compared to the right plot of Figure~\ref{fig:mM_stab_dt_vs_Dt} where we can see that as $\de t$ gets smaller, the increase in stability flattens.

\begin{figure}
\centering
\includegraphics[width=\textwidth]{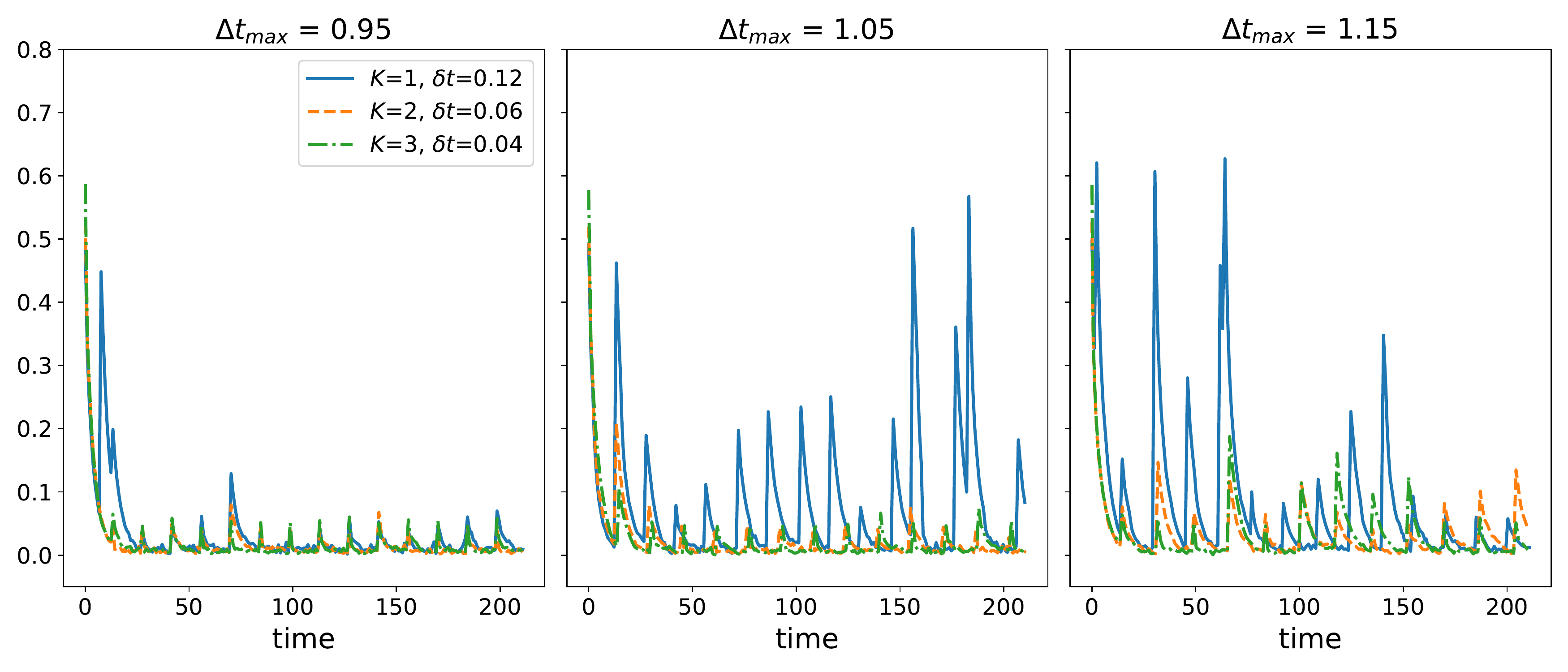}
\caption{The evolution of $\|\Sigma - V^{\delta t}_\infty\|$ for different macro time steps $\Delta t$, where $\Sigma$ is the full variance evolving under the micro-macro acceleration method with adaptive extrapolation strategy of the slow mean and variance, and $V^{\delta t}_\infty$ the variance of the invariant distribution of the Euler-Maruyama scheme with micro time step $\delta t$. The numbers of micro steps $K$ and corresponding micro time steps $\delta t$ are chosen so that the microscopic window $K\delta t$, on which Euler-Maruyama simulation of the full system is performed, stays constant and equal to $0.12$. With $K=1$ (solid blue line) the simulation is stable only in the left plot; for $\Delta t = 1.05, 1.15$ the large peaks appear revealing formation of instabilities. Increasing $K$ (dashed orange and dot-dashed green lines) yields more accurate microscopic simulations, which in turn can be seen to produce larger stability thresholds for $\Delta t$ (central and right plot). However, the boost in stability saturates quickly for linear systems under consideration and there is not much difference between $K=2$ and $K=3$.}
\label{fig:mMadap_linSF_smeanvar_Kconv}
\end{figure}

\subsubsection{ Convergence with non-Gaussian initial condition}\label{se:mM_nongauss_smenvar_num}
Hitherto, we restricted both the theoretical and numerical analysis of the micro-macro simulations with Gaussian initial conditions. Extensive study of the stability with non-Gaussian initial conditions is beyond the scope of this manuscript and we only present one numerical illustration -- with initial condition given as a Gaussian mixture (see also Remark~\ref{re:Gmix}). For some analytical results in the non-Gaussian case we refer to~\cite{Zielinski2019}.

In Figure~\ref{fig:mMadap_nongauss_convlinSF10}, we assess the quality of distributions evolving under the micro-macro acceleration method for~\eqref{eq:sf_num} by computing the Kullback-Leibler divergence~\eqref{eq:kld} of these distributions to the invariant distribution $\nd_{0,V^{\de t}_{\infty}}$. We use adaptive extrapolation of the slow mean and variance and take $\De t = 0.5$ and $\de t = 0.09$. As can be seen in the inset, we use a Gaussian mixture yielding a bimodal initial distribution. For reference, we choose the Gaussian initial condition such that its variance is equal to the variance of the mixture and its mean is such that the Kullback-Leibler divergence between the Gaussian and the invariant distribution is the same as between the mixture and the invariant distribution (this ensures that the curves start at the same point). There is no apparent difference in the transient regime between the mixture and Gaussian reference, which suggests that for linear systems the non-Gaussianity during the transient phase may not influence the stability results obtained here.

\begin{figure}
\centering
\includegraphics[width=0.8\textwidth]{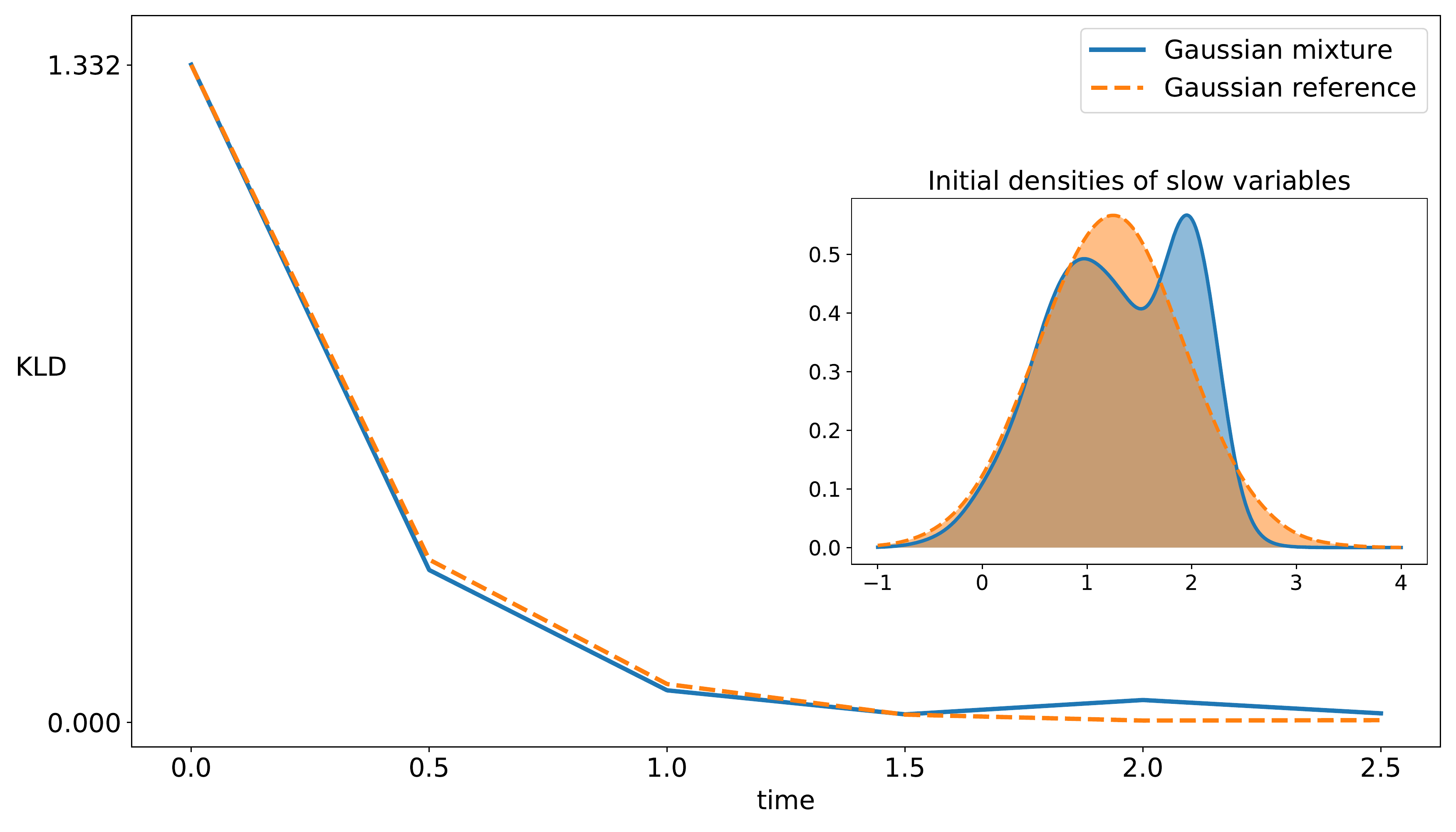}
\caption{Evolution of the Kullback-Leibler divergence between the slow invariant density $N_{0, V_{\delta t}}$ and the consecutive distributions of the micro-macro acceleration method for slow-fast system~\eqref{eq:sf_num} initialized with: a Gaussian mixture (solid blue line), and a reference Gaussian density (dashed orange line). The  slow reference Gaussian density is chosen so that it has the same variance and the same Kullback-Leibler divergence from $N_{0, V_{\delta t}}$ as the Gaussian mixture (plotted in the inset). For both initial distributions, the method converges to the invariant density and we do not observe any deviation in the transient region due to non-Gaussianity.}
\label{fig:mMadap_nongauss_convlinSF10}
\end{figure}

\section*{Conclusions and outlook}
For our theoretical analysis, we inquired the linear stability of a micro-macro method in a setting that combines short propagation of the exact distribution with forward in time extrapolation of a few macroscopic state variables. For this study, we concentrated on linear vector equations, that can accommodate multiple scales, and distinguished a particular subclass (in Assumption~\ref{as:test_mscale}) of linear systems. We demonstrate in Theorem~\ref{th:mM_gauss_smean} how having two concurrent time steps in the method --~microscopic for path simulation and macroscopic for extrapolation~-- allows to bypass the microscopic stability constraints on the macroscopic one. While the macroscopic stability bound for the extrapolation of slow mean coincides exactly with the stability threshold of the ODE closure for the slow mean, we showed that in the case of slow mean and variance extrapolation, it corresponds to the closed ODE for slow mean and variance with perturbation proportional to the microscopic time step.

For numerical experiments, though performed in the Gaussian setting, we discretize in probability and employ short bursts of path simulations to mimic the situation where the exact distributions cannot be followed and compare it to our analytical results. First, we demonstrated the connection between crossing the macroscopic stability threshold and lack of convergence of the Newton-Raphson iteration to compute the matching, which gives a useful benchmark -- matching failure -- to spot instabilities developing in the micro-macro acceleration method (Figure~\ref{fig:mMnoad_mean_nostab}). Using this benchmark, as illustrated in Figure~\ref{fig:mM_stab_dt_vs_Dt} (left), we observe a complete agreement of numerical stability with analytical bounds in the drift block-diagonal case. However, there is a qualitative difference when applying the micro-macro acceleration method with slow mean extrapolation to the slow-fast systems, Figure~\ref{fig:mM_stab_dt_vs_Dt} (right). To what extent we can use our analysis in the framework of slow-fast system remains for future work.
We also illustrated the relevance of an adaptive procedure -- based on matching failures -- for the extrapolation time step (Figure~\ref{fig:mMadap_smean_nostab}), which allows to perform micro-macro acceleration when the stability threshold remains unknown. For the extrapolation of slow mean and variance, we also observe agreement between theoretical findings and numerical simulation with empirical measures, exemplified in Figure~\ref{fig:mMadap_linSF_smeanvar_conv}.

\subsection*{Acknowledgements}
GS and PZ acknowledge the support of the Research Council of the University of Leuven through grant \lq PDEOPT\rq, and of the Research Foundation -- Flanders (FWO -- Vlaanderen) under grant G.A003.13.

\appendix

\section{Monte-Carlo simulation}\label{se:MC_sim}
Let us discuss in more detail the discretisation in probability of the micro-macro acceleration algorithm presented in Sections~\ref{se:mM_accel_alg} and~\ref{se:mM_accel_alg_linSDE}. We consider an initial distribution given as a random empirical measure
\begin{equation*}
P^{\De t,J}_{n}=\sum_{j=1}^{J}w^j_n\de_{X^j_n},
\end{equation*}
where $X^j_n$, $j=1,\dotsc,J$, are i.i.d.~replicas with associated weights $w^j_n$.

\textit{Stage~1: Propagation of microscopic laws.}
In this stage, we freeze the weights and propagate each replica over the $K$ steps of inner integrator $\prop^{\de t}$ (compare with~\eqref{eq:em_test})
\begin{equation*}
X^{\de t, j}_{n,k+1} = \prop^{\de t}(a, b, X^{\de t, j}_{n,k},\xi^{\de t, j}),\quad X^{\de t,j}_{n,0}=X^j_n,
\end{equation*}
where $\xi^{\de t, j}$ are $J$ i.i.d.~centered normal variables with variance $\de t$ (replicas of $\de W_{k+1}$). In particular, when applying the Euler-Maruyama method to~\eqref{eq:genSDE} we get
\begin{equation*}
X^{\de t, j}_{n,k+1} = X^{\de t, j}_{n,k} + a\big(X^{\de t, j}_{n,k}\big)\de t + b\big(X^{\de t, j}_{n,k}\big)\xi^{\de t, j}.
\end{equation*}
The replicas $X^{\de t, j}_{n,k}$ are associated to the following empirical measures
\begin{equation*}
P^{\de t,J}_{n,k} = \sum_{j=1}^{J}w^j_n\de_{X^{\de t,j}_{n,k}},\quad k=1,\dotsc,K.
\end{equation*}

\textit{Stage~2: Restriction to a finite number of observables.}
In this stage, we evaluate the restriction operator $\res$ on the empirical measures to obtain $K$ empirical observables as $\mbf{m}^{J}_{n,k} = \res(P^{\de t,J}_{n,k})$. For example, when restricting to the slow mean only as in~\eqref{eq:res_smean}, we find $K$ vectors
\begin{equation*}
\mbf{m}^{s,J}_{n,k} = \sum_{j=1}^{J}w^j_nY^{\de t,j}_{n,k},
\end{equation*}
where $Y^{\de t,j}_{n,k}=\proj^sX^{\de t,j}_{n,k}$ and $\proj^s$ is the projection onto the slow variables.

\textit{Stage~3: Extrapolation of macroscopic states.}
We apply the extrapolation operator $\ext^{\De t - K\de t}$ to the empirical macroscopic states $\mbf{m}^{J}_{n,0},\dotsc,\mbf{m}^{J}_{n,K}$ and obtain $\mbf{m}^{s,J}_{n+1}$ as in formula~\eqref{eq:extrap}.
In the special case of linear extrapolation one proceeds according to~\eqref{eq:lin_extrap}, with $\mbf{m}^{s,J}_{n,0}$ and $\mbf{m}^{s,J}_{n,K}$ plugged in the right-hand side.

\textit{Stage~4: Matching.}
Finally, the matching amounts to re-weighting the replicas $X^{\de t, j}_{n,K}$ using the Lagrange multipliers associated to procedure~\ref{eq:match_smean} or~\ref{eq:match_smeanvar} on page~\pageref{eq:match_smean}. Concentrating on~\ref{eq:match_smean} only, we approximate the Lagrange multipliers by applying the Newton-Raphson iteration to equation (compare with~\eqref{eq:lagr_mean})
\begin{equation}\label{eq:lagr_MC}
\grad[\la]A^s\big(\la,P^{\de t,J}_{n,K}\big)=\mbf{m}^{s,J}_{n+1},\quad\text{where}\quad A^s\big(\la,P^{\de t,J}_{n,K}\big) = \ln\sum_{j=1}^{J}w^j_n\exp\big(\la\sdot\proj^sX^{\de t,j}_{n,K}\big).
\end{equation}
In this fashion, we obtain, up to a given tolerance, the vector $\oline{\la}^{s,J}_{n+1}$ of Lagrange multipliers with which, following~\eqref{eq:dist_mean}, we evaluate weights
\begin{equation*}
w^j_{n+1} = w^j_n\exp\Big(\oline{\la}^{s,J}_{n+1}\sdot\proj^sX^{\de t,j}_{n,K} - A^s\big(\oline{\la}^{s,J}_{n+1},P^{\de t,J}_{n,K}\big)\Big).
\end{equation*}
The matched empirical distribution reads
\begin{equation*}
P^{\De t,J}_{n+1} = \sum_{j=1}^{J}w^j_{n+1}\de_{X^{\de t,j}_{n,K}}.
\end{equation*}

\section{Gap in the drift spectrum and the time scales of~\eqref{eq:test}}\label{se:tscale_dynsys}
To elucidate how Assumption~\ref{as:test_mscale} influences the time scales present in the stochastic dynamics, recall first that equation~\eqref{eq:test} is related to the \emph{Ornstein-Uhlenbeck} operator
\begin{equation}\label{eq:ou_op}
\gen = Ax\sdot\grad[x] + \frac{1}{2}\tr{B\hess[x]}.
\end{equation}
SDE~\eqref{eq:test} and operator~\eqref{eq:ou_op} are connected through the Markov semigroup $(e^{t\gen})_{t\geq0}$, generated by $\gen$, that satisfy
\begin{equation*}
\big(e^{t\gen}f\big)(x) = \Exp[][f(X_t)|X_t=x],
\end{equation*}
for every $t\geq0$ and $f\in\contb(\R^d)$, the space of all continuous and bounded functions on $\R^d$. The assumptions that $\spm{A}\subset\C_-$ and $B$ is positive definite ensure the existence of a unique Gaussian invariant measure $\nd_{0,V_\infty}$ for $(e^{t\gen})_{t\geq0}$, where $V_\infty$ is given by~\eqref{eq:test_equivar}. To be more precise, the condition for invariance reads
\begin{equation}\label{eq:inv_meas}
\Exp[\nd_{0,V_\infty}]\big[e^{t\gen}f\big] = \Exp[\nd_{0,V_\infty}][f],
\end{equation}
for all $f\in\contb(\R^d)$. Moreover, the semigroup $(e^{t\gen})_{t\geq0}$ extends to a strongly continuous semigroup of positive contractions in the complex Hilbert space $\leb{2}_{\C}(\R^d, d\nd_{0,V_\infty})$~\cite{Lunardi1997}.

The time scales induced by the semigroup $(e^{t\gen})_{t\geq0}$ in the space $\leb{2}_{\C}(\R^d, d\nd_{0,V_\infty})$ are determined by the eigenvalue problem $\gen f = \ga f$~\cite[p.~371]{ZhaHarSch2016}. Every eigenpair $(\ga,\ph)$, with $\re(\ga)<0$ and $\|\ph\|_2=1$, is related to a decay of $e^{t\gen}\ph$ towards the equilibrium on time scales of order $|2\ga|^{-1}$, to wit
\begin{equation*}
\Var[\!\nd_{0,V_\infty}]\big[e^{t\gen}\ph\big] = \big\|e^{t\gen}\ph - \Exp[\nd_{0,V_\infty}][\ph]\big\|_2 = e^{2\re(\ga) t},
\end{equation*}
where we used condition~\eqref{eq:inv_meas} and $\|\cdot\|_2$ denotes the associated $\leb{2}_{\C}$-norm. Having a complete orthonormal system $\{\ph_p\}_{p=1,2\dotsc}$ of eigenfunctions in $\leb{2}_{\C}(\R^d, d\nd_{0,V_\infty})$, the Fourier expansion
\begin{equation}\label{eq:fourier}
f = \sum_{p=1}^{\infty}\bra f,\ph_p\ket_2\,\ph_p
\end{equation}
decomposes the trend of $e^{t\gen}f$ towards the equilibrium into separate modes, supported by the invariant subspaces generated by each $\ph_p$ and exponentially decaying with rates given by $\re(\ga_p)$.

The spectrum of the Ornstein-Uhlenbeck operator~\eqref{eq:ou_op} in $\leb{2}_{\C}(\R^d, d\nd_{0,V_\infty})$ consists of all complex numbers $\ga = \sum_{i=1}^{d}n_i\ka_i$ with $n_i\in\N_0, \ka_i\in\spm{A}$, and all the (generalised) eigenfunctions are polynomials and form a complete system~\cite{MetPalPrio2002}. Moreover, the invariant subspaces related to each $\ga$ consist of homogeneous polynomials in the variables induced by the spectral decomposition of $A$ and with degrees ranging throughout all $n_i>0$ that appear in the sums generating $\ga$~\cite[Sect.~4]{MetPalPrio2002}.

There are three main implications of these facts in our setting. First, the spectrum of $\gen$ is independent of the diffusion matrix $B$. This demonstrates that $B$ has no effect on the time scales of the dynamics and justifies the omission of any assumptions on its spectrum. Second, the eigenvalues of the drift matrix are embedded inside the spectrum of $\gen$ and induce the most prominent time scales. Indeed, every $\ga=\ka_i$ is uniquely determined by $n_i=1$ and $n_j=0$ for $j\neq i$. The associated eigenfunction is a homogeneous polynomial of degree $1$ in a variable associated to the invariant subspace of $\ka_i$. Therefore, in the Fourier expansion~\eqref{eq:fourier}, all eigenvalues $\ka_i$ constitute the first approximation of $f$. Finally, the dynamics of $e^{t\gen}$ has infinite number of different time scales but the gap in $\spm{A}$ reveals itself at the lowest order modes. This can be seen by applying the decomposition of the state space $\R^d$ into the slow vector variable $y$, associated to $\Om^s$, and the fast vector variable $z$, associated to $\Om^f$, see the paragraph following Assumption~\ref{as:test_mscale}. This grouping decomposes the first approximation of $f$ into polynomials in $y$ and $z$ that equilibrate under the action of $e^{t\gen}$ on two different time scales with gap given by the gap in $\spm{A}$.

\printbibliography

\end{document}